\newtheorem {Problem} {Problem}[section]
\newtheorem {Theorem} [Problem]{Theorem}
\newtheorem {Lemma}[Problem]{Lemma}
\newtheorem {Corollary}[Problem]{Corollary}
\newenvironment {Proof}{\noindent {\bf Proof.}}{\hfill\ensuremath{\square}}
\newcommand*{\QEDB}{\hfill\ensuremath{\square}}
\begin{document}

\title{Spectral extremal results on  the $\alpha$-index of  graphs without  minors and star forests\thanks{This work is supported by  the National Natural Science Foundation of China (Nos.12101166, 12101165, 11971311, 12026230) and Hainan Provincial Natural Science Foundation of China (Nos. 120RC453, 120MS002), the
Montenegrin-Chinese Science and Technology Cooperation Project (No.3-12).
\newline \indent $^{\ddag}$E-Mails address:
mzchen@hainanu.edu.cn (M.-Z. Chen),\ \ \ amliu@hainanu.edu.cn (A-M. Liu),\ \ \ xiaodong@sjtu.edu.cn %(Corresponding author: X.-D.Zhang)
 }}

\author{Ming-Zhu Chen, %\thanks{E-mail: mzchen@hainanu.edu.cn},
 A-Ming Liu,%\thanks{E-mail: amliu@hainanu.edu.cn}
 \\
School of Science, Hainan University, Haikou 570228, P. R. China, \\
\and  Xiao-Dong Zhang%$^{\dagger}$
\thanks{Corresponding author (E-mail: xiaodong@sjtu.edu.cn)}
\\School of Mathematical Sciences, MOE-LSC, SHL-MAC\\
  \vspace{1cm}
Shanghai Jiao Tong University,
Shanghai 200240, P. R. China\\
Dedicated to Professor Fan Chung, with admiration and thanks}

\date{}
\maketitle

\begin{abstract}
  Let $G$ be a graph of order $n$, and let $A(G)$ and $D(G)$ be the adjacency matrix and the degree matrix of $G$ respectively. Define the convex linear combinations
  $A_\alpha (G)$ of $A (G)$ and
$D (G) $  by
$$A_\alpha (G)=\alpha D(G)+(1-\alpha)A(G)$$  for any real number $0\leq\alpha\leq1$. The  \emph{$\alpha$-index} of $G$ is the largest eigenvalue of $A_\alpha(G)$. In this paper,  we determine the  maximum $\alpha$-index and characterize all extremal graphs for  $K_r$ minor-free graphs, $K_{s,t}$ minor-free graphs, and   star-forest-free graphs  for any $0<\alpha<1$ by unified eigenvector approach,  respectively.
\\ \\
{\it AMS Classification:} 05C50, 05C83, 05C35\\ \\
{\it Key words:}  Spectral radius;  $\alpha$-index;   extremal graphs; star forests; minors
\end{abstract}

\section{Introduction}
 Let $G$ be an undirected simple graph with vertex set
$V(G)=\{v_1,\dots,v_n\}$ and edge set $E(G)$, where $n$ is called the order of $G$.
%Denote  $\delta(G)$ by   minimum degree of $G$.
The \emph{adjacency matrix}
$A(G)$ of $G$  is the $n\times n$ matrix $(a_{ij})$, where
$a_{ij}=1$ if $v_i$ is adjacent to $v_j$, and $0$ otherwise. The  \emph{spectral radius} of $G$, denoted by $\rho(G)$, is the largest eigenvalue of $A(G)$. The  \emph{signless Laplacian spectral radius} of $G$,  denoted by $q(G)$, is the largest eigenvalue of $Q(G)$, where $Q(G)=A(G)+D(G)$ and $D(G)$ is the degree diagonal matrix.
For $v\in V(G)$,  %the \emph{neighborhood} $N_G(v)$ of $v$  is $\{u: uv\in E(G)\}$ and
the \emph{degree} $d_G(v)$ of $v$  is the number of vertices adjacent to $v$ in $G$.
We write  $d(v)$ for $d_G(v)$  if there is no ambiguity.
 Denote by $\Delta(G)$ the maximum degree of $G$ and $\overline{G}$ the complement graph of $G$. Let $S_{n-1}$ be a star of order $n$. The \emph{center} of a star is the vertex of maximum degree in the star.  A \emph{star forest} is a forest whose components are stars.
  The \emph{centers} of a star forest are the centers of the stars in the star forest.  A graph $G$ is \emph{$H$-free} if   it  does not contain $H$ as a subgraph.  A graph $H$ is called a \emph{minor} of a graph $G$ if it can be obtained from $G$ by deleting edges, contracting edges or deleting vertices. A graph $G$ is \emph{$H$ minor-free} if   it  does not contain $H$ as a minor. % The classical \emph{Tur\'{a}n number},
%denoted by $ex(n, H)$, is the maximum number of edges in an $H$-free graph of order $n$.
 For $X,Y\subseteq V(G)$,  $e(X)$ denotes the number of edges in $G$ with two ends in $X$ and $e(X,Y)$ denotes the number of edges in $G$ with one end in $X$ and the other in $Y$.
 For two vertex disjoint graphs $G$ and $H$,  we denote by  $G\cup H$ and  $G\nabla H$  the \emph{union} of $G$ and $H$,
and the \emph{join} of $G$ and $H$ which is obtained by joining every vertex of $G$ to every vertex of $H$, respectively.
Denote by $kG$   the union of $k$ disjoint copies of $G$.
For graph notation and terminology undefined here,  readers are referred to \cite{BM}.

% Cvetkovi\'{c} called the study of the adjacency matrix
%the $A$-spectral theory, and the study of the signless Laplacian?Cthe $Q$-spectral
%theory.
To track the gradual change of $A (G)$ into $Q (G)$,
Nikiforov \cite{Nikiforov2017} proposed  and studied the convex linear combinations $A_\alpha(G)$ of $A (G)$ and
$D (G) $ defined by
$$A_\alpha (G)=\alpha D(G)+(1-\alpha)A(G)$$
for any real number $0\leq\alpha\leq1$.
  Note that $A_0(G)=A(G)$, $2A_{1/2}(G)=Q(G)$, and $A_1(G)=D(G)$. The  \emph{$\alpha$-index} of $G$ is the largest eigenvalue of $A_\alpha(G)$, denoted by
 $\rho_\alpha(G)$. Clearly, $\rho_0(G)=\rho(G)$ and $2\rho_{1/2}(G)=q(G)$.

 Let $\mathbf x=(x_u)_{u\in V(G)}$ be an eigenvector to $\rho_\alpha(G)$. By eigenequations of $A_\alpha(G)$ on any vertex $u\in V(G)$,
$$\rho_\alpha(G)x_u=\alpha d(u) x_u+(1-\alpha)\sum_{uv\in E(G)} x_v.$$
Since $A_\alpha (G)$ is a real symmetric matrix, Rayleigh's principle implies that
$$\rho_\alpha (G)=\max_{||{\bf x}||_2=1}\sum_{uv\in E(G)}(\alpha x_u^2+2(1-\alpha)x_u+\alpha x_v^2),$$
also see \cite{Nikiforov2017}.
Note that $A_\alpha(G)$ is nonnegative. By Perron-Frobenius theory of nonnegative matrices, if $G$ is connected then  $A_\alpha(G)$ has a positive eigenvector corresponding to $\rho_\alpha(G)$, also see \cite{Nikiforov2017}. In addition, if $G$ is connected and $H$ is a proper subgraph of $G$, then $$\rho_\alpha(G)>\rho_\alpha(H).$$

%Recall that the problem of maximizing the number of edges over all graphs without  fixed subgraphs is one of the cornerstones of graph theory.
%\begin{Problem}\label{P0}
%Given a graph $H$, what is the maximum number of edges of a graph $G$ of order $n$ without $H$ $?$
%\end{Problem}
%
%Many instances of Problems~\ref{P0} have been solved. For example,
% Lidick\'{y}, Liu, and Palmer \cite{Lidicky2013}  determined the Tur\'{a}n number for a forbidden forest forest  and  all  extremal graphs if the order of a  graph is  sufficiently large.
In spectral extremal graph theory, one of the central problems, which is called spectral Tur\'{a}n problem, is to find the maximum  $\rho(G)$  or $q(G)$ of a graph $G$ of order $n$ without $H$ as a subgraph or as a minor$?$ This problem is intensively investigated in the literature for
many classes of graphs. %In 2017, Nikiforov \cite{Nikiforov2017} merged Problems~\ref{P1} and \ref{P2} into one,
%namely:
%
%\begin{Problem}\label{P3}
%Given a graph $H$, what is the maximum $\rho_\alpha(G)$ of a graph $G$ of order $n$ without $H$$?$
%\end{Problem}
%of the following types:
%\begin{Problem}\label{P1}
%Given a graph  $H$, what is
%\end{Problem}
%
%
%\begin{Problem}\label{P2}
%Given a graph  $H$, what is the maximum $q(G)$ of a graph $G$ of order $n$ without $H$$?$
%\end{Problem}
 For example, Tait \cite{Tait2019} determined  the maximum spectral radius for $K_r$ minor-free graphs and $K_{s,t}$ minor free-graphs by using eigenvector. They pointed out  the extremal graphs for maximizing number of edges
and spectral radius are the same for small values of $r $ and $s $ and then differed significantly.
%\begin{Theorem}\label{Kr}
%Let $r\geq 3$ and $G$ be a $K_r$ minor-free graph of sufficiently large order $n$.
%Then
% $$\rho(G)\leq \rho(K_{r-2}\nabla K_{n-r+2})$$ with equality if and only if $G=K_{r-2}\nabla K_{n-r+2}$.
%\end{Theorem}
%
%\begin{Theorem}\label{Kst}
%Let $t\geq s\geq 2$ and $G$ be a $K_{s,t}$ minor-free graph of sufficiently large order $n$.
%Then
%\begin{eqnarray*}
%% \nonumber to remove numbering (before each equation)
%  \rho(G) &\leq& \frac{s+t-3+\sqrt{(s+t-3)^2+4((s-1)(n-s+1)-(s-2)(t-1))}}{2}
%\end{eqnarray*}
%with equality if and only if $n-s+1=pt$ and $G=K_{s-1}\nabla pK_t$.
%\end{Theorem}
Chen, Liu and Zhang \cite{Chen2019,Chen2020} determined the maximum (signless Laplacian) spectral radius for $kP_3$-free graphs. They \cite{Chen2019-2} also  determined the
maximum signless Laplacian spectral radius for $K_{2,t}$ minor-free graphs.  In addition, Nikiforov \cite{Nikiforov2011} gave an excellent survey on this topic.
%\begin{Theorem}\label{3}\cite{Chen2020}
%Let $k\geq2 $ and $G$ be a $k P_3$-free graph of order $n\geq \frac{11}{2}k^2-9k+12$. Then $$q(G)\leq q(F_{n,k})$$ with equality if and
%only if $G=F_{n,k}$.
%\end{Theorem}
%
%\begin{Theorem}\label{K2t}\cite{Chen2019-2}
%Let $t\geq4$ and $G$ be a $K_{2,t}$-minor free graph of order $n\geq t^2+4t+1$. Then $$q(G)\leq \frac{n+2t-2+\sqrt{(n-2t+2)^2+8t-8}}{2}$$ with
%equality if and only if $n-1=pt$ and $G=K_1\nabla pK_t$.
%\end{Theorem}
For more results, see \cite{He2013,Nikiforov2015,Tait2019,Tait2017,Yuan2014}.
%Let $F_{n,k}=K_{k-1}\nabla ( (p K_2)\cup K_s)$, where $n-(k-1)=2p+s$ and $0\leq s<2$.

%\begin{Theorem}\label{A}\cite{Chen2019}
%Let $k\geq2 $ and $G$ be a $k P_3$-free graph of order $n\geq 8k^2-3k$. Then $$\rho(G)\leq \rho(F_{n,k}) $$ with equality if and
%only if   $G= F_{n,k}$.
%\end{Theorem}

%Nikiforov first solved Problem~\ref{P3} when $H$ is a complete graph, which extended spectral Tur\'{a}n theorem \cite{Nikiforov2007} and signless Laplacian spectral Tur\'{a}n theorem \cite{He2013}.%In order to state these results, we need some symbols for given graphs.

%
%\begin{Theorem}\label{stt}\cite{Nikiforov2017}
%Let $r \geq 2$ and $G$ be a $K_{r+1}$-free graph of order $n$.
%
%(i) If  $0 \leq \alpha < 1-\frac{1}{r}$, then
%$$\rho_\alpha(G)\leq \rho_\alpha(T_{n,r})$$ with equality if and only if $G=T_{n,r}$.
%
%(ii) If $\alpha=1-\frac{1}{r}  $, then
%$$\rho_\alpha(G)\leq \bigg(1-\frac{1}{r}\bigg)n $$ with equality if and only if $G$ is a complete $r$-partite graph.
%
%(iii) If $1-\frac{1}{r} <\alpha <1 $, then
%$$\rho_\alpha(G)\leq \rho_\alpha(K_{r-1}\nabla \overline{K}_{n-r+1})$$  with equality if and only if $G=K_{r-1}\nabla \overline{K}_{n-r+1}$.
%\end{Theorem}

 %Surprising,  Theorem~\ref{stt} are unexpected. Nikiforov pointed out that if nothing else, Theorem~\ref{stt} shows
%that it is worth studying $A_\alpha (G)$, for it is unlikely to discover them in a different context.

Motivated by above results, we investigate the the maximum $\alpha$-index for $K_r$ minor-free graphs, $K_{s,t}$ minor-free graphs, and star-forest-free graphs.
%In this paper,  we first determine the   maximum $\alpha$-index and characterize all  extremal graphs for $K_r$-minor and $K_{s,t}$-minor free graphs for any $0<\alpha<1$, respectively.
We show the extremal graphs of $K_r$ minor-free graphs and $K_{s,t}$ minor-free graphs for maximizing
$\alpha$-index for any $0<\alpha<1$ and  sufficiently large $n$.
 Furthermore, we determine the  maximum $\alpha$-index and  characterize  all  extremal graphs for star-forest-free graphs   for any $0<\alpha<1$. %, extending the reslut of the signless Laplacian  spectral radius for $kP_3$-free graphs for large $n$ in \cite[Theorem~1.6]{Chen2010}
The main results of this paper are stated as follows.

\begin{Theorem}\label{thm2}
Let $r\geq 3$ and $G$ be a $K_r$ minor-free graph of sufficiently large order $n$.
Then for any  $0< \alpha <1$,
 $$\rho_\alpha(G)\leq \rho_\alpha(K_{r-2}\nabla \overline{K}_{n-r+2})$$ with equality if and only if $G=K_{r-2}\nabla \overline{K}_{n-r+2}$.
\end{Theorem}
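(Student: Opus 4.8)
The plan is to follow the eigenvector approach of Tait, adapted to $A_\alpha(G)$. Let $G$ be an extremal $K_r$ minor-free graph on $n$ vertices, which we may assume is connected (otherwise adding an edge between components keeps it $K_r$ minor-free and increases $\rho_\alpha$), and let $\mathbf x$ be the positive Perron eigenvector of $A_\alpha(G)$ normalized so that its maximum entry is $x_z=1$ for some vertex $z$. Write $\rho=\rho_\alpha(G)$. Since $K_{r-2}\nabla\overline{K}_{n-r+2}$ is $K_r$ minor-free and is a subgraph of nothing larger that is still $K_r$ minor-free, we have the lower bound $\rho\geq\rho_\alpha(K_{r-2}\nabla\overline{K}_{n-r+2})$, and the known edge bound (Mader) $e(G)\leq (r-2)n - \binom{r-1}{2}$ gives $\rho\leq\alpha\Delta(G)+(1-\alpha)\cdot(\text{something})$; more usefully, from $\rho\,x_z=\alpha d(z)+(1-\alpha)\sum_{zv\in E}x_v$ and the Rayleigh quotient one extracts that $\rho = (r-2+o(1))\,$ has the right order, and in fact $\rho_\alpha(K_{r-2}\nabla\overline{K}_{n-r+2}) = \alpha n + O(1)$ (the star-like structure forces the $\alpha$-index close to $\alpha\Delta$). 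So the first step is to pin down the asymptotics $\rho = \alpha n + O(1)$ for the extremal graph.

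The second step is a structural reduction. Let $L = \{v : x_v > \epsilon\}$ for a suitable small constant $\epsilon$ (or a threshold like $\tfrac{1}{2}$), the set of "large" vertices. Using the eigenequation $\rho\,x_u = \alpha d(u)x_u + (1-\alpha)\sum_{uv\in E}x_v$ together with $\rho\approx\alpha n$, a vertex with small $x_u$ must have small degree unless it neighbors many large-weight vertices; conversely, summing eigenequations over all vertices and using the Mader edge bound, one shows $|L|$ is bounded — in fact $|L|\le r-2$ — and every vertex outside $L$ has almost all of its neighbors inside $L$. The core combinatorial input here is that $G$ being $K_r$ minor-free prevents $L$ from being too large or too densely connected to the rest: if $|L|\geq r-1$ and each vertex of $L$ has linearly many neighbors outside, one builds $r-1$ disjoint connected "branch sets" plus the vertices of $L$ to realize a $K_r$ minor, a contradiction. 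This is the step I expect to be the main obstacle: making the minor-extraction argument work cleanly with the weighted (eigenvector) thresholds, and controlling the error terms uniformly in $\alpha\in(0,1)$ — when $\alpha$ is close to $0$ the $\alpha D$ term is weak and one must lean more on the adjacency part, and the constants in "sufficiently large $n$" will depend on $r$ and possibly degrade as $\alpha\to 0$.

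The third step is the stability-to-exact argument. Once we know $G$ has a set $L$ of at most $r-2$ vertices dominating almost everything, we show $|L|=r-2$ exactly, $L$ is a clique, and $L$ is completely joined to $V(G)\setminus L$: any missing edge incident to $L$ can be added (this cannot create a $K_r$ minor since $L\cup\{$any two non-$L$ vertices$\}$ would already have to contain the forbidden structure, and one checks $K_{r-2}\nabla\overline{K}_{n-r+2}$ plus an edge among the independent part does contain a $K_r$ minor for $n$ large) and adding an edge strictly increases $\rho_\alpha$ by the Perron–Frobenius strict monotonicity quoted in the introduction. Finally, the part $V(G)\setminus L$ must be independent: an edge there, combined with the $r-2$ vertices of $L$, yields a $K_r$ (not just a minor) once — wait, it yields $K_{r-2}\nabla K_2 = K_r$, so it is forbidden outright. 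Hence $G = K_{r-2}\nabla\overline{K}_{n-r+2}$, and since every step only increased $\rho_\alpha$ or was forced, the extremal graph is unique, completing the proof.
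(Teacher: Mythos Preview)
Your overall strategy matches the paper's---Tait's eigenvector method adapted to $A_\alpha$, with a large/small partition by eigenvector weight---but two of your key steps have genuine gaps that the paper handles quite differently.

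First, your argument for $|L|\le r-2$ does not work as stated. From $x_v>\epsilon$ the eigenequation gives only $d(v)\gtrsim \frac{\alpha\epsilon}{1-\alpha+\alpha\epsilon}\,n$, linear in $n$ but with a small constant; you cannot extract a $K_r$ minor from $r-1$ such vertices unless their degrees are close enough to $n$ that pairwise common neighbourhoods are nonempty, and that fails for small $\epsilon$. The paper does \emph{not} prove $|L|\le r-2$. It shows $|L|=O_\epsilon(1)$, proves that any $u\in L$ with $x_u$ close to $1$ satisfies $d(u)\ge (1-O(1-x_u+\epsilon))n$, and then runs an iterative averaging argument (its Claim~4): given $s<r-2$ vertices $X$ with entries near $1$ and degrees near $n$, the bipartite $K_r$-minor-free bound $e(L,S)\le (r-2+o(1))n$ together with a second-moment eigenequation computation forces some vertex of $L\setminus X$ to also have entry near $1$. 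Bootstrapping from $w$ yields a set $K$ of exactly $r-2$ vertices with entries $\ge 1-\delta$ and common neighbourhood $T$ of size $\ge(1-\delta)n$. This iterative step is the technical heart of the proof and is absent from your sketch.

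Second, your claim that ``any missing edge incident to $L$ can be added'' without creating a $K_r$ minor is false. If $v\in R:=V\setminus(K\cup T)$ already has a neighbour $w\in T$ and you add the one missing $K$--$v$ edge, then $K\cup\{v,w\}$ induces $K_r$. The paper's route is different: it first proves (Claim~6) that every eigenvector entry outside $K$ is $o(1)$, and then (Claim~7) uses edge-\emph{switching} rather than edge-adding. For $v\in R$, delete all edges at $v$ and join $v$ to every vertex of $K$; the resulting graph is still $K_r$ minor-free (now $v$ is adjacent only to the $(r-2)$-clique $K$, so it cannot participate in a $K_r$ minor), and the Rayleigh quotient strictly increases because the entries on $K$ are at least $1-\delta$ while those on $v$'s former neighbours are $o(1)$. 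This forces $R=\emptyset$, after which your final observation---that an edge inside $T$ together with $K$ yields $K_r$---finishes the proof.
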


\begin{Theorem}\label{thm3}
Let $t\geq s\geq 2$ and $G$ be a $K_{s,t}$ minor-free graph of sufficiently large order $n$.
Then for any  $0< \alpha <1$,
$\rho_\alpha(G)$ is no more than the largest root of $f_\alpha(x)=0$, and
equality holds if and only if $n-s+1=pt$ and $G=K_{s-1}\nabla pK_t$, where
\begin{eqnarray*}
 % \nonumber to remove numbering (before each equation)
   &&f_\alpha(x) =x^2-\Big(\alpha n+s+t-3\Big)x+\\
    && \Big(\alpha(n-s+1)+s-2\Big)\Big(\alpha(s-1)+t-1\Big)-(1-\alpha)^2(s-1)(n-s+1).
 \end{eqnarray*}
\end{Theorem}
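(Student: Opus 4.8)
\medskip
\noindent\textbf{Proof proposal.} The plan is to take an extremal graph, reduce to the connected case, extract coarse structure from the $A_\alpha$-eigenequations, sharpen this to the exact shape $K_{s-1}\nabla(\text{cliques of order}\le t)$, and conclude via a two-block eigenvector inequality that reproduces $f_\alpha$. So let $G$ be a $K_{s,t}$-minor-free graph of large order $n$ with $\rho_\alpha(G)$ maximum. Since $K_{s,t}$ is $2$-connected, adding an edge between two components of a $K_{s,t}$-minor-free graph creates no $K_{s,t}$-minor, so by the strict monotonicity of $\rho_\alpha$ under adding an edge to a connected graph we may take $G$ connected; let $\mathbf{x}>0$ be a Perron eigenvector of $A_\alpha(G)$ with $x_z=\max_v x_v=1$, and set $\rho:=\rho_\alpha(G)$. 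Three facts will be used. (i) For distinct $v_1,\dots,v_j$, the graph $G[N(v_1)\cap\cdots\cap N(v_j)]$ is $K_{s-j,t}$-minor-free, since a $K_{s-j,t}$-minor of the common neighbourhood together with the singleton branch sets $\{v_1\},\dots,\{v_j\}$ on one side would give a $K_{s,t}$-minor of $G$; in particular the common neighbourhood of any $s-1$ vertices induces a $K_{1,t}$-minor-free graph, and a $K_{s',t}$-minor-free graph on $m$ vertices has at most $\tfrac12(t+2s'-3)(m-s'+1)+\binom{s'-1}{2}$ edges. (ii) If a graph has a set of at most $s-1$ vertices whose deletion leaves only components of order $\le t$, it is $K_{s,t}$-minor-free: at least $t+1$ of the $s+t$ branch sets of a putative minor avoid that set, hence lie in those components, and adjacency to a fixed branch set inside one component forces all of them into that one component, which is impossible. (iii) Consequently $H_0:=K_{s-1}\nabla(pK_t\cup K_r)$ with $p=\lfloor(n-s+1)/t\rfloor$, $r=n-s+1-pt$ is $K_{s,t}$-minor-free, so $\rho\ge\rho_\alpha(H_0)\ge\alpha\,\Delta(H_0)\ge\alpha(n-1)$; and $\rho_\alpha(K_{s-1}\nabla pK_t)$ equals the largest root of $f_\alpha$, since the $2\times2$ quotient matrix of the partition into apices and clique-vertices has characteristic polynomial $f_\alpha$.

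For the coarse step I would apply the eigenequation at $z$ and then at each neighbour of $z$, and sum, to obtain
$$\rho^{2}-\alpha\big(d(z)+1\big)\rho+(2\alpha-1)d(z)=(1-\alpha)\Big(\sum_{uv\in E(G[N(z)])}\!\!\!(x_u+x_v)+\alpha R_1+(1-\alpha)R_2\Big),$$
where $R_1,R_2$ sum over edges between $N(z)$ and $V\setminus(N(z)\cup\{z\})$, weighted respectively by the $N(z)$-endpoint and by the other endpoint. With $x_v\le1$ the right side is $O(tn)$ by the edge bound in (i); together with $d(z)\le n-1$ and $\rho\ge\alpha(n-1)$ this forces $\rho=\alpha n+O(1)$ and then $d(z)\ge n-C$ for a constant $C=C(\alpha,s,t)$. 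Next, $\rho\sum_v x_v=\sum_v d(v)x_v\le(s-1)(n-1)+2e(G)=O(n)$ gives $\sum_v x_v=O(1)$; and since any $s$ vertices of degree $\ge n-C'$ would have $\ge t$ common neighbours and span a $K_{s,t}$, at most $s-1$ vertices have degree $\ge n-C'$. Combining these I would show there are \emph{exactly} $s-1$ vertices, a set $S$, of degree $n-O(1)$ — fewer such vertices would make the two-block bound below drop $\rho$ by a positive constant, contradicting (iii).

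Now, with $R:=V\setminus S$ (so $|R|=n-s+1$), I would run a cleaning procedure: replace any component of $G[R]$ of order $>t$ by a disjoint union of cliques of order $\le t$; add any non-edge inside a component of $G[R]$, inside $S$, or between $S$ and $R$. By (ii) each move keeps $G$ $K_{s,t}$-minor-free (deleting $S$ still leaves components of order $\le t$), and by the $\rho_\alpha$-monotonicity quoted in the excerpt (with $\mathbf x>0$ used for the splitting move) each move does not decrease $\rho_\alpha$; extremality therefore forces $G=K_{s-1}\nabla H$ with $H$ a disjoint union of cliques of orders $t_1\ge\cdots\ge t_m\le t$ and $\sum_i t_i=n-s+1$.

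For such $G$, let $\mathbf y>0$ be the Perron eigenvector and $\sigma_S=\sum_{v\in S}y_v$, $\sigma_R=\sum_{v\in R}y_v$; since $d_G(v)\le s+t-2$ for $v\in R$ and $d_G(v)=n-1$ for $v\in S$, summing the eigenequation over $R$ and over $S$ yields
$$\big(\rho-\alpha(s-1)-(t-1)\big)\sigma_R\le(1-\alpha)(n-s+1)\sigma_S,\qquad \big(\rho-\alpha(n-s+1)-(s-2)\big)\sigma_S\le(1-\alpha)(s-1)\sigma_R,$$
using $\alpha(s+t-2)+(1-\alpha)(t-1)=\alpha(s-1)+t-1$ and $\alpha(n-1)+(1-\alpha)(s-2)=\alpha(n-s+1)+s-2$; both bracketed factors are positive for large $n$ (e.g. $\rho>\rho_\alpha(K_{s-1}\nabla\overline K_{n-s+1})>\alpha(n-s+1)+s-2$). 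Multiplying and cancelling $\sigma_S\sigma_R>0$ gives precisely $f_\alpha(\rho)\le 0$, so $\rho$ is at most the largest root of $f_\alpha$. Tracing the equality case forces equality in each estimate: every $v\in R$ has degree exactly $s+t-2$ and is adjacent to all of $S$, so each component of $G[R]$ is a $K_t$, hence $n-s+1=pt$ and $G=K_{s-1}\nabla pK_t$, which attains the bound by (iii). The main obstacle is the content of the last two paragraphs — establishing that there are exactly $s-1$ heavy vertices and that splitting an oversized component of $G[R]$ genuinely increases $\rho_\alpha$ — which will require eigenvector control finer than the crude $x_v\le1$ bounds that suffice for the coarse step.
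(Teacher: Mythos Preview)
Your overall arc is right, and your final two-block derivation of $f_\alpha(\rho)\le 0$ by summing the eigenequations over $S$ and over $R$ is correct; the paper obtains the same inequality via the max-entry variant (its Lemma~\ref{spec2}: pick $v\in H$ with $x_v$ maximal rather than summing), so that piece is a genuine but inessential difference.

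The substantive divergence is in the structural reduction, precisely where you flag the obstacles. Your ``splitting'' move cannot be pushed through by $\rho_\alpha$-monotonicity: replacing an oversized component of $G[R]$ by a union of $K_t$'s deletes edges, and there is no general reason $\rho_\alpha$ should not drop (a Rayleigh-quotient argument with the old $\mathbf x$ does not help, since the deleted edges may carry more weight than the added clique edges). Your argument for \emph{exactly} $s-1$ heavy vertices is also circular as written, since the two-block bound you invoke presupposes the join structure you are trying to establish. The paper sidesteps both issues. It runs Tait's iterative scheme (the analogue of Claims~2--4 in its $K_r$-minor proof): starting from the entry-$1$ vertex $w$, it repeatedly produces a new vertex with eigenvector entry $\ge 1-O(\eta)$ and hence degree $\ge (1-O(\eta))n$, building a set $K$ of size $s-1$ --- crucially, this controls eigenvector entries, not just degrees. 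With $T$ the common neighbourhood of $K$ (so $|T|\ge(1-\delta)n$) and $R=V\setminus(K\cup T)$, the paper then shows (analogue of Claims~6--7) that every $x_v$ with $v\notin K$ is $o(1)$, so any $v\in R$ can be disconnected from its current neighbours and rejoined to $K$ with a strict gain in $\rho_\alpha$; extremality forces $R=\emptyset$. This yields $G=G[K]\nabla H$ with $H=G[T]$ and, immediately from $K_{s,t}$-minor-freeness, $\Delta(H)\le t-1$ --- so no splitting is ever needed for the upper bound. Lemma~\ref{spec2} then gives $f_\alpha(\rho)\le 0$, with equality forcing $H$ to be $(t-1)$-regular; the paper handles the equality characterisation not by a $\rho_\alpha$-monotone operation but by counting: each component $H_1$ of $H$ is $K_{1,t}$-minor-free, so the Ding--Johnson--Seymour bound $e(H_1)\le |H_1|+\tfrac{t(t-3)}2$ contradicts $(t-1)$-regularity once $|H_1|$ exceeds $t$, forcing every component to be $K_t$. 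In short, the right intermediate target is ``$K_{s-1}\nabla H$ with $\Delta(H)\le t-1$'' rather than ``$K_{s-1}\nabla(\text{cliques})$''; the clique structure enters only in the equality analysis, and there via an edge count rather than a graph modification.
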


\begin{Theorem}\label{thm1}
Let $F=\cup_{i=1}^k S_{d_i}$ be a star forest with $k\geq2$ and $d_1\geq\cdots \geq d_k\geq1$.
If $G$ is an $F$-free graph of  order $n\geq \frac{4(\sum_{i=1}^k d_i+k-2)(\sum_{i=1}^k d_i+3k-5)}{\alpha^3}$ for any $0<\alpha<1$,  then
 $\rho_\alpha(G)$ is no more than the largest root of $f_\alpha(x)=0$, and
equality holds if and only if $G=K_{k-1}\nabla H$ and $H$ is a $(d_k-1)$-regular graph of order $n-k+1$, where
\begin{eqnarray*}
 % \nonumber to remove numbering (before each equation)
   &&f_\alpha(x) =x^2-\Big(\alpha n+k+d_k-3\Big)x+\\
    && \Big(\alpha(n-k+1)+k-2)\Big)\Big(\alpha(k-1)+d_k-1\Big)-(1-\alpha)^2(k-1)(n-k+1).
 \end{eqnarray*}
 \end{Theorem}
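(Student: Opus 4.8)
The plan is to run the standard eigenvector/structural argument for spectral extremal problems, adapted to $A_\alpha$. Let $G$ be an $F$-free graph on $n$ vertices attaining the maximum $\alpha$-index, which we may assume is connected (otherwise replace $G$ by adding edges between components, which strictly increases $\rho_\alpha$ and preserves $F$-freeness since $F$ has a component-bounded structure — actually one must be slightly careful, so instead compare against $K_{k-1}\nabla H_0$ for a concrete regular $H_0$ to get a lower bound $\rho_\alpha(G)\ge \rho_\alpha(K_{k-1}\nabla H_0)$, which already forces $\rho_\alpha(G)$ to be of order $\alpha n$). Let $\mathbf x$ be the positive Perron eigenvector normalized so that $\max_u x_u = x_z = 1$ for some vertex $z$, and write $\rho=\rho_\alpha(G)$. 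The first step is to show $\rho \ge \alpha n + O(1)$ from the lower-bound construction, and conversely $\rho \le \alpha(n-1) + \Delta(G) + $ lower-order, so that $\Delta(G)$ is large; combined with the eigenequation at $z$, namely $\rho = \alpha d(z) + (1-\alpha)\sum_{v\sim z} x_v$, one extracts that $\sum_{v\sim z}(1-x_v)$ is small (bounded by a constant depending on $\sum d_i$ and $k$), hence "most" neighbors of $z$ have $x_v$ close to $1$. This is where the hypothesis $n \ge 4(\sum d_i + k-2)(\sum d_i + 3k-5)/\alpha^3$ enters quantitatively.

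The second step is the structural core: I would show that $G$ contains a set $W$ of $k-1$ vertices each adjacent to almost all of $V(G)$, and that $G - W$ has bounded maximum degree. The mechanism: if $G-W$ contained a vertex of degree $\ge \sum d_i + k - 1$ (roughly), one could greedily build the star forest $F$ — use $W$ together with high-$\mathbf x$-value, high-degree vertices as centers of the first few stars and the large-degree vertex in $G-W$ to host the remaining leaves, a disjointness/counting argument guaranteeing enough room because $n$ is huge. One iterates: peel off one near-dominating vertex at a time. The key obstacle, and the place requiring the most care, is making this greedy embedding of $F$ work while controlling eigenvector weights simultaneously — one needs that deleting $W$ and all low-weight vertices still leaves a graph dense enough to find the stars, and that the vertices used as centers genuinely have the required degree in $G$ minus the already-used leaves. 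This is the analogue of the "stability then exact" step and typically occupies the bulk of the proof.

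The third step converts structure into the exact extremal graph. Once we know $G \subseteq K_{k-1} \nabla H$ with $\Delta(H) \le d_k - 1$ (any larger degree in the "outside" part would complete $F$, using the $k-1$ dominating vertices as the centers of $S_{d_1},\dots,S_{d_{k-1}}$ and a degree-$d_k$ vertex of $H$ as the center of $S_{d_k}$), maximality of $\rho_\alpha$ forces $G = K_{k-1}\nabla H$ with $H$ as dense as allowed, i.e. $(d_k-1)$-regular of order $n-k+1$; here one uses that adding any missing edge strictly increases $\rho_\alpha$ (quoted from the excerpt) together with the fact that among graphs of the form $K_{k-1}\nabla H$ with $\Delta(H)\le d_k-1$, the $\alpha$-index is maximized when $H$ is regular of the maximum possible degree — a standard eigenvector-perturbation/quotient-matrix monotonicity argument. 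Finally, for $G = K_{k-1}\nabla H$ with $H$ being $(d_k-1)$-regular of order $n-k+1$, the equitable partition $\{W,\, V(H)\}$ gives the $2\times 2$ quotient matrix
\[
\begin{pmatrix} \alpha(n-1) + (1-\alpha)(k-2) & (1-\alpha)(n-k+1) \\ (1-\alpha)(k-1) & \alpha(k-1) + d_k - 1 \end{pmatrix},
\]
whose characteristic polynomial is exactly the displayed $f_\alpha(x)$ (after simplifying $\alpha(n-1)+(1-\alpha)(k-2) = \alpha(n-k+1)+k-2$), and since this partition is equitable the largest root equals $\rho_\alpha(G)$; one checks it dominates the other eigenvalues via Perron-Frobenius. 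I expect the hardest single point to be the uniform control of the eigenvector entries of neighbors of the near-dominating vertices during the iterative peeling, which is what pins down the precise polynomial bound on $n$.
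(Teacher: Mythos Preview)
Your overall architecture---isolate $k-1$ near-dominating vertices, bound the maximum degree in the remainder by $d_k-1$, then finish with the $2\times2$ quotient matrix---matches the paper, and your quotient-matrix computation is exactly right. But the technical engine you propose for the structural step is different from what the paper does, and yours is vaguer at precisely the point you flag as hardest. You borrow the \emph{iterative peeling} scheme from the minor-free arguments (find one high-entry vertex, show it has high degree, then find another, and so on). The paper does not iterate at all in the star-forest case: it fixes a single threshold $\epsilon=\frac{1}{4(\sum d_i+3k-5)}$, sets $L=\{u:x_u>\epsilon\}$, and proves $|L|=k-1$ in one shot. The case $|L|\ge k$ is easy (each $u\in L$ has $d(u)\ge\sum d_i+k-1$, so $F$ embeds greedily). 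The case $|L|\le k-2$ is where the real work is, and it uses two ingredients you did not name: an elementary edge bound $e(G)\le\frac12(\sum d_i+2k-3)n$ for $F$-free graphs (Lemma~\ref{e1}), and the identity
\[
\sum_{uv\in E(G)}(x_u+x_v)\ \ge\ \frac{\rho_\alpha(\rho_\alpha+1-\alpha-\alpha d(w))}{1-\alpha},
\]
obtained by multiplying the eigenequation at $w$ by $\rho_\alpha$ and rearranging. Combining this lower bound with the trivial upper bound $\sum_{uv\in E(L,S)}(x_u+x_v)\le(1+\epsilon)e(L,S)\le(1+\epsilon)(k-2)n$ forces $n$ below the stated threshold, a contradiction. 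This is where the explicit bound on $n$ actually bites---not in controlling eigenvector entries of neighbors as you guessed.

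On connectivity: your instinct that ``adding an edge between components'' is dangerous is correct---doing so can create $F$. The paper does not argue this way. Instead it proves the connected case first (Theorem~\ref{c-thm2}) with the weaker threshold $n\ge 4(\sum d_i+k-2)(\sum d_i+3k-5)/\alpha^2$, and then, for disconnected $G$, shows the component $G_1$ realizing $\rho_\alpha(G)$ already has $n_1\ge$ that threshold (because $n_1-1\ge\rho_\alpha(G)\ge\alpha n+O(1)$ and $n\ge(\cdots)/\alpha^3$), so the connected result applies to $G_1$ and gives a strictly smaller root since $n_1<n$. This is why the final statement has $\alpha^3$ rather than $\alpha^2$ in the denominator.
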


The rest of this paper is organized as follows. In Section~2, some lemmas are presented. In Section~3, we give the proofs of Theorem~\ref{thm2} and Theorem~\ref{thm3}. In Section~4, we give the proof of Theorem~\ref{thm1} and some corollaries.

%The rest of this paper is organized as follows. In Section~2, we present some known and necessary results.
%In Section~3, we give the proofs of Theorems~\ref{thm1.1} and  \ref{thm1.2}.
%In Section~4, we present some necessary lemmas and give the proof of Theorem~\ref{thm-K33-minor free}.

\section{Preliminary}

\begin{Lemma}\label{Cor-Spec3}
Let  $0<\alpha< 1$, $k\geq2$, and  $n\geq k-1$. If $G =K_{k-1} \nabla \overline{K}_{n-k+1}$, then
% \nonumber to remove numbering (before each equation)
  $\rho_\alpha(G)\geq  \alpha(n-1)+(1-\alpha)(k-2)$.
  In particular, if $n\geq \frac{(2k-3)^2}{2\alpha^2}-\frac{8k^2-18k+9}{2\alpha}+2k(k-1)$, then
$\rho_\alpha(G)\geq \alpha n+\frac{2k-3-(2k-1)\alpha}{2\alpha}$.
\end{Lemma}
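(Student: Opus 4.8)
The plan is to bound $\rho_\alpha(G)$ from below by evaluating the Rayleigh quotient at a well-chosen test vector, and then to simplify the resulting lower bound under the stated hypothesis on $n$. Let $G=K_{k-1}\nabla\overline{K}_{n-k+1}$, write $A$ for the set of $k-1$ dominating vertices (those forming the clique $K_{k-1}$) and $B$ for the independent set of the remaining $n-k+1$ vertices. A natural first attempt is the all-ones vector, but a cleaner choice is a two-valued vector $\mathbf{x}$ that is constant on $A$ and constant on $B$; by symmetry the quotient $\frac{\mathbf{x}^{\top}A_\alpha(G)\mathbf{x}}{\mathbf{x}^{\top}\mathbf{x}}$ then reduces to the Rayleigh quotient of the $2\times 2$ quotient matrix
$$
M=\begin{pmatrix}
\alpha(n-1)+(1-\alpha)(k-2) & (1-\alpha)(n-k+1)\\[2pt]
(1-\alpha)(k-1) & \alpha(k-1)
\end{pmatrix},
$$
whose rows record the eigenequations on a vertex of $A$ and on a vertex of $B$ respectively (a vertex of $A$ has degree $n-1$ with $k-2$ neighbours in $A$ and $n-k+1$ in $B$; a vertex of $B$ has degree $k-1$, all in $A$). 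Since $\rho_\alpha(G)$ equals the Perron root of $M$ (the quotient matrix of an equitable partition), it suffices to bound the Perron root of $M$. Taking the test vector supported on $A$ (i.e. $x_v=1$ for $v\in A$, $x_v=0$ for $v\in B$) already gives $\rho_\alpha(G)\ge M_{11}=\alpha(n-1)+(1-\alpha)(k-2)$, which is the first claimed inequality. Alternatively one simply observes $K_{k-1}\nabla\overline{K}_{n-k+1}$ contains $K_{k-1}\nabla\overline{K}_{1}=K_k$... but the direct diagonal-entry bound is cleaner, since $\rho_\alpha$ of any graph is at least its largest diagonal entry $\alpha\Delta(G)=\alpha(n-1)\ge\alpha(n-1)+(1-\alpha)(k-2)$ only when $k=2$; so I will instead argue via $\rho_\alpha(G)\ge \max_i (A_\alpha \mathbf{e}_i)_i$ is too weak, and genuinely use the Rayleigh quotient at the indicator of $A$, whose value is exactly $M_{11}$ because every vertex of $A$ sees every other vertex of $A$.

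For the second, sharper inequality, I would write $\rho:=\rho_\alpha(G)$ and use the characteristic equation of $M$,
$$
\big(\rho-\alpha(n-1)-(1-\alpha)(k-2)\big)\big(\rho-\alpha(k-1)\big)=(1-\alpha)^2(k-1)(n-k+1).
$$
From the first inequality we know $\rho\ge \alpha(n-1)+(1-\alpha)(k-2)$, so the first factor on the left is nonnegative; set $\rho=\alpha n+c$ and solve for the constant $c$. Substituting $\rho=\alpha n+c$ turns the left side into a quadratic in $c$ with the leading behaviour $c^2$ plus $O(1)\cdot c$ terms and a constant of size $\Theta(n)$ coming from $(1-\alpha)^2(k-1)(n-k+1)$ on the right; solving the quadratic and keeping the larger root shows $c\to \frac{2k-3}{2\alpha}-\frac{2k-1}{2}$ is an \emph{underestimate} once $n$ is large, and the threshold at which $\alpha n+\frac{2k-3-(2k-1)\alpha}{2\alpha}$ becomes a valid lower bound is precisely where the discriminant/monotonicity estimate kicks in. Concretely, I would plug the candidate value $\rho_0:=\alpha n+\frac{2k-3-(2k-1)\alpha}{2\alpha}$ into $g(x):=\big(x-\alpha(n-1)-(1-\alpha)(k-2)\big)\big(x-\alpha(k-1)\big)-(1-\alpha)^2(k-1)(n-k+1)$, which is the characteristic polynomial of $M$ (up to sign) and is increasing for $x\ge \rho$; showing $g(\rho_0)\le 0$ then forces $\rho\ge\rho_0$. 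The inequality $g(\rho_0)\le 0$ unwinds to a single polynomial inequality in $n$ (with $k,\alpha$ as parameters) that is quadratic in $n$ with positive leading coefficient, and the stated bound $n\ge \frac{(2k-3)^2}{2\alpha^2}-\frac{8k^2-18k+9}{2\alpha}+2k(k-1)$ is exactly the larger root of that quadratic (or a convenient rounding of it).

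The main obstacle is purely computational bookkeeping: carrying the $\alpha$-weighted degree terms correctly through the $2\times 2$ reduction, and then identifying the threshold on $n$ with the given closed form without sign errors. There is no conceptual difficulty — equitable partitions make the spectral computation exact, and everything reduces to checking the sign of one explicit quadratic in $n$. I would organize the write-up as: (i) record the equitable partition and its quotient matrix $M$; (ii) get $\rho\ge M_{11}$ from the indicator test vector; (iii) write the characteristic equation and substitute the candidate $\rho_0$; (iv) verify $g(\rho_0)\le 0$ holds under the hypothesis on $n$ by reducing to the displayed quadratic threshold, and conclude by monotonicity of $g$ past $\rho$.
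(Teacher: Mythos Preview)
Your approach is essentially the paper's: reduce via the equitable $(A,B)$-partition to a $2\times 2$ quotient matrix, identify $\rho_\alpha(G)$ as the larger root of its characteristic polynomial $g(x)$, obtain the first bound by comparing $\rho_\alpha(G)$ with the entry $M_{11}$, and obtain the second by checking $g(\rho_0)\le 0$ at the candidate $\rho_0=\alpha n+\frac{2k-3-(2k-1)\alpha}{2\alpha}$. The only cosmetic difference is that the paper gets $\rho_\alpha(G)\ge M_{11}$ by bounding the discriminant in the quadratic formula rather than by a Rayleigh-quotient/test-vector argument; both are one-line.

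One small correction to your bookkeeping expectations: when you substitute $\rho_0=\alpha n+c$ into $g$, the $\alpha^2 n^2$ terms from $x^2$ and from $(\alpha n+k-2)x$ cancel, so $g(\rho_0)$ is \emph{linear} in $n$, not quadratic. The paper finds
\[
g(\rho_0)=-\tfrac{1-\alpha}{2}\Bigl(n-\tfrac{(2k-3)^2}{2\alpha^2}+\tfrac{8k^2-18k+9}{2\alpha}-2k(k-1)\Bigr),
\]
so the stated threshold on $n$ is exactly where $g(\rho_0)$ changes sign (not ``the larger root of a quadratic''). This actually simplifies your step (iv): there is no discriminant to track, just a single linear inequality in $n$.
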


\begin{Proof}
 Set for short $\rho_\alpha =\rho_\alpha (G)$ and let ${\bf x_\alpha}=(x_v)_{v\in V(G)}$  be a positive
eigenvector to $\rho_\alpha$. By symmetry,  all vertices corresponding to $K_{k-1}$ in the representation $G := K_{k-1}  \nabla \overline{K}_{n-k+1}$ have the same eigenvector entries, denoted by $x_1$. Similarly, all  remaining vertices have the same eigenvector entries, denoted by $x_2$.
By eigenequations of $A_\alpha(G)$, we have
%\begin{eqnarray*}
%% \nonumber to remove numbering (before each equation)
% (\rho_\alpha-\alpha(n-1)) x_1 &=& (1-\alpha)(k-2)x_1+(1-\alpha)(n-k+1)x_2\\
% (\rho_\alpha-\alpha (k-1)) x_2 &= &(1-\alpha)(k-1)x_1,
%\end{eqnarray*}
%which implies that
\begin{eqnarray*}
% \nonumber to remove numbering (before each equation)
   (\rho_\alpha-\alpha(n-1)-(1-\alpha)(k-2))x_1&=& (1-\alpha)(n-k+1)x_2 \\
 (\rho_\alpha-\alpha(k-1))x_2 &=& (1-\alpha)(k-1)x_1.
\end{eqnarray*}
Then
%$$f(\rho_\alpha)\leq0,$$ where
% \begin{eqnarray*}
% % \nonumber to remove numbering (before each equation)
%   &&f_\alpha(x) =x^2-\Big(\alpha n+k+d-3\Big)x+\\
%    && \Big(\alpha(n-1)+(1-\alpha)(k-2)\Big)\Big(\alpha(k+d-2)+d-1\Big)-(1-\alpha)^2(k-1)(n-k+1).
% \end{eqnarray*}
%Hence $\rho_\alpha$ is no more than the largest root of $f_\alpha(x)=0$.
%Let $d=1$ in Lemma~\ref{spec2}. Then
$\rho_\alpha(G)$ is the largest root of $g(x)=0$, where
$$g(x)=x^2-(\alpha n+k-2)x+ (k-1)(2\alpha-1)n+(k-1)(k-k\alpha-1)=0.$$
Clearly,
\begin{eqnarray*}
         % \nonumber to remove numbering (before each equation)
           \rho_\alpha(G) &=& \frac{\alpha n+k-2+\sqrt{(\alpha n+k-2)^2-4(k-1)(2\alpha-1)n-4(k-1)(k-k\alpha-1)}}{2} \\
           &\geq& \frac{(\alpha n+k-2)+(\alpha n+k-2-(k-1)\alpha) }{2}\\
           &=&\alpha n+k-2-(k-1)\alpha\\
           &=&\alpha(n-1)+(1-\alpha)(k-2).
         \end{eqnarray*}

In addition,
\begin{eqnarray*}
% \nonumber to remove numbering (before each equation)
  &&g\bigg( \alpha n+\frac{2k-3-(2k-1)\alpha}{2\alpha}\bigg)\\
 % &=&  \bigg( \alpha n+\frac{2k-3-(2k-1)\alpha}{2\alpha} \bigg)^2-(\alpha n+k-2)\bigg( \alpha n+\frac{2k-3-(2k-1)\alpha}{2\alpha} \bigg)+\\
%  &&  (k-1)(2\alpha-1)n+(k-1)(k-k\alpha-1)\\
&=&-\frac{(1-\alpha)}{2}\bigg(n-\frac{(2k-3)^2}{2\alpha^2}+\frac{8k^2-18k+9}{2\alpha}-2k(k-1)\bigg)\\
&\leq&0,
\end{eqnarray*}
we have $$\rho_\alpha(G)\geq \alpha n+\frac{2k-3-(2k-1)\alpha}{2\alpha}.$$
\end{Proof}
\vspace{3mm}

Next we compare two lower bounds of $\rho_\alpha(G)$ in Lemma~\ref{Cor-Spec3}.

\vspace{3mm}
{\bf Remark.} Note that $$\alpha n+\frac{2k-3-(2k-1)\alpha}{2\alpha}-(\alpha(n-1)+(1-\alpha)(k-2))=\frac{((2k-2)\alpha-(2k-3))(\alpha-1)}{2\alpha}.$$
If $0<\alpha\leq \frac{2k-3}{2k-2}$, then $$\alpha n+\frac{2k-3-(2k-1)\alpha}{2\alpha}\geq\alpha(n-1)+(1-\alpha)(k-2).$$
If  $\frac{2k-3}{2k-2}<\alpha<1 $, then $$\alpha n+\frac{2k-3-(2k-1)\alpha}{2\alpha}<\alpha(n-1)+(1-\alpha)(k-2).$$

%In order to present a spectral result for a special graph, we first give the definition of  $N(\alpha)$, where $0\leq\alpha<1$:
%
%\[N(\alpha)=\left\{
%\begin{array}{cccc}
% \vspace{1mm}
%   \frac{(d-1)^2+(k-1)^2}{k-1}&& \mbox{if $\alpha=0$ }\\
%   \vspace{1mm}
% && \max\{k-1,2k-2+\frac{d-k+1}{\alpha}\} \mbox{if $0<\alpha<1$}
%\end{array}\right.
%\]

\begin{Lemma}\label{spec2}
Let $0< \alpha<1$, $d \geq 2$, $k\geq1$,  $n\geq  \max\{k-1,2k-2+\frac{d-k+1}{\alpha}\}$, and  $H$ be a graph of order $n -k+1$.  If $G= K_{k-1} \nabla H$ and
$\Delta(H) \leq d-1$, then $\rho_\alpha(G)$ is no more than the largest root of $f_\alpha(x)=0$, and
equality holds if and only if $H$ is a $(d-1)$-regular graph, where
\begin{eqnarray*}
 % \nonumber to remove numbering (before each equation)
   &&f_\alpha(x) =x^2-\Big(\alpha n+k+d-3\Big)x+\\
    && \Big(\alpha(n-k+1)+k-2\Big)\Big(\alpha(k-1)+d-1\Big)-(1-\alpha)^2(k-1)(n-k+1).
 \end{eqnarray*}
\end{Lemma}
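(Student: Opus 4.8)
The plan is to use the standard eigenvector-partitioning argument, exploiting the fact that $G = K_{k-1}\nabla H$ has a natural vertex partition into the clique part $S = V(K_{k-1})$ and the part $T = V(H)$. Let $\rho_\alpha = \rho_\alpha(G)$ and let $\mathbf{x} = (x_v)$ be the Perron eigenvector, normalized so its maximum entry is $1$. First I would establish that the largest entry of $\mathbf{x}$ is attained on $S$: since every vertex of $S$ is adjacent to all $n-1$ other vertices while $d_H(u)\le d-1$ gives $d_G(u)\le k-1+d-1 = k+d-2$ for $u\in T$, and since $n$ is large, the eigenequation forces $x_s > x_u$ for $s\in S$, $u\in T$. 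Actually, by symmetry of $S$ all its entries are equal; call that common value $x_1$ (and it equals $1$ after normalization). This is the routine part.

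The heart of the argument is to bound, for each $u\in T$, the quantity $\rho_\alpha x_u = \alpha d_G(u)x_u + (1-\alpha)\bigl(\sum_{s\in S}x_s + \sum_{w\in N_H(u)}x_w\bigr)$. Write $d_G(u) = (k-1) + d_H(u)$. Let $z = \sum_{u\in T}x_u$. Summing the eigenequations over all $u\in T$ gives
\begin{eqnarray*}
\rho_\alpha z &=& \alpha(k-1)z + \alpha\sum_{u\in T}d_H(u)x_u + (1-\alpha)(k-1)(n-k+1)x_1 + 2(1-\alpha)\sum_{uw\in E(H)}x_u.
\end{eqnarray*}
Using $\sum_{u\in T}d_H(u)x_u \le (d-1)z$ and $2\sum_{uw\in E(H)}x_u = \sum_u d_H(u)x_u \le (d-1)z$ (the Rayleigh-type bound via $\Delta(H)\le d-1$), one obtains $\rho_\alpha z \le \bigl(\alpha(k-1) + (d-1)\bigr)z + (1-\alpha)(k-1)(n-k+1)x_1$, i.e.
$$\bigl(\rho_\alpha - \alpha(k-1) - (d-1)\bigr)z \le (1-\alpha)(k-1)(n-k+1)x_1.$$
Combining this with the eigenequation on any vertex $s\in S$, namely $\rho_\alpha x_1 = \alpha(n-1)x_1 + (1-\alpha)\bigl((k-2)x_1 + z\bigr)$, i.e. $\bigl(\rho_\alpha - \alpha(n-1) - (1-\alpha)(k-2)\bigr)x_1 = (1-\alpha)z$, and eliminating $z$ and $x_1$ yields
$$\bigl(\rho_\alpha - \alpha(n-1) - (1-\alpha)(k-2)\bigr)\bigl(\rho_\alpha - \alpha(k-1) - (d-1)\bigr) \le (1-\alpha)^2(k-1)(n-k+1).$$
Expanding and rewriting with $\alpha(n-1)+(1-\alpha)(k-2) = \alpha(n-k+1)+k-2$ shows the left side minus the right side is exactly $f_\alpha(\rho_\alpha)$ up to the substitution, so $f_\alpha(\rho_\alpha)\le 0$. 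Since $f_\alpha$ is a monic quadratic and $\rho_\alpha$ exceeds its larger root whenever $\rho_\alpha$ is at least the average of the two roots — which holds here because $\rho_\alpha \ge \rho_\alpha(K_{k-1}\nabla\overline{K}_{n-k+1}) \ge \alpha(n-1)+(1-\alpha)(k-2)$ by Lemma~\ref{Cor-Spec3} and this lower bound can be checked to exceed $\frac{1}{2}(\alpha n + k+d-3)$ for $n$ large — we conclude $\rho_\alpha \le$ (the larger root of $f_\alpha$).

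For the equality characterization: equality throughout forces $\sum_{u\in T}d_H(u)x_u = (d-1)z$ with all $x_u$ positive, which (since $d_H(u)\le d-1$ for every $u$) is possible only if $d_H(u) = d-1$ for all $u\in T$, i.e. $H$ is $(d-1)$-regular. Conversely, if $H$ is $(d-1)$-regular then $G = K_{k-1}\nabla H$ has the equitable partition $(S,T)$ with quotient matrix $\begin{pmatrix}\alpha(n-1)+(1-\alpha)(k-2) & (1-\alpha)(n-k+1)\\ (1-\alpha)(k-1) & \alpha(k-1)+(d-1)\end{pmatrix}$, whose Perron root is a root of $f_\alpha$; since the partition is equitable this root equals $\rho_\alpha(G)$, and being the spectral radius it is the larger root. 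I expect the main obstacle to be the bookkeeping in verifying that the lower bound $\alpha(n-1)+(1-\alpha)(k-2)$ from Lemma~\ref{Cor-Spec3} lies above the vertex of the parabola $f_\alpha$ (so that $f_\alpha(\rho_\alpha)\le 0$ really does force $\rho_\alpha$ up to the larger root rather than trapping it below the smaller one); this needs the hypothesis $n \ge \max\{k-1,\,2k-2+\frac{d-k+1}{\alpha}\}$ and is where the stated lower bound on $n$ gets used.
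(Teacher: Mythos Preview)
Your proposal is correct and follows essentially the same eigenvector-partitioning route as the paper: derive one relation from the clique part $S$ and one from $T=V(H)$, combine them, and read off $f_\alpha(\rho_\alpha)\le 0$. The one technical difference is that you sum the eigenequations over $T$ and work with $z=\sum_{u\in T}x_u$, whereas the paper instead picks $v\in V(H)$ maximizing $x_v$ and bounds $\sum_{u\in T}x_u\le (n-k+1)x_v$ and $\sum_{u\in N_H(v)}x_u\le (d-1)x_v$; this gives the paper two inequalities to multiply (hence the need to check $\rho_\alpha\ge \alpha(k-1)+d-1$ via Lemma~\ref{Cor-Spec3} and the hypothesis on $n$ before multiplying), while your $S$-relation is an exact equality and you can substitute directly. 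One correction: your worry about the vertex of the parabola is unnecessary. Once you have $f_\alpha(\rho_\alpha)\le 0$ for the monic quadratic $f_\alpha$, it follows immediately that $\rho_\alpha$ lies between the two real roots of $f_\alpha$, so in particular $\rho_\alpha$ is at most the larger root---no positioning argument is needed. (In fact, because your $S$-relation is an equality, your argument for the upper bound does not even require the sign check $\rho_\alpha\ge\alpha(k-1)+d-1$ that the paper's version does.) The equality analysis and the equitable-partition converse are fine as you outlined.
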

\begin{Proof}
Let $u_1,u_2,\cdots,u_{k-1}$ be the vertices of $G$ corresponding to $K_{k-1}$ in the representation $G := K_{k-1} \nabla H$. Set for short $\rho_\alpha =\rho_\alpha (G)$ and let ${\bf x_\alpha}=(x_v)_{v\in V(G)}$  be a positive
eigenvector to $\rho_\alpha$. By symmetry,  $x_{u_1}=\cdots=x_{u_{k-1}}$.  Choose a vertex $v\in V (H)$ such that
$$x_v = \max_{z\in V (H)} x_z.$$

Since $\Delta(H)\leq d-1$ and $G=K_{n-1}\nabla H$, we have $d(v)\leq k-1+d-1=k+d-2.$ By eigenequations of $A_\alpha(G)$ on $u_1$ and $v$, we have

\begin{equation}\label{1.1}
\begin{aligned}
% \nonumber to remove numbering (before each equation)
 (\rho_\alpha-\alpha(n-1)) x_{u_1} &= (1-\alpha)(k-2)x_{u_1}+(1-\alpha)\sum_{uu_1\in E(H)}x_u\\
 &\leq(1-\alpha)(k-2)x_{u_1}+(1-\alpha)(n-k+1)x_v
     \end{aligned}
 \end{equation}
 \begin{equation}\label{1.2}
\begin{aligned}
  (\rho_\alpha-\alpha (k+d-2)) x_{v}  &\leq(\rho_\alpha-\alpha d(v)) x_{v} = (1-\alpha)(k-1)x_{u_1}+(1-\alpha)\sum_{uv\in E(H)}x_u\\
 & \leq(1-\alpha)(k-1)x_{u_1}+(1-\alpha)(d-1)x_v,
    \end{aligned}
 \end{equation}
which implies that
\begin{eqnarray*}
% \nonumber to remove numbering (before each equation)
   (\rho_\alpha-\alpha(n-1)-(1-\alpha)(k-2))x_{u_1}&\leq& (1-\alpha)(n-k+1)x_v \\
 (\rho_\alpha-\alpha(k+d-2)-(1-\alpha)(d-1))x_v &\leq& (1-\alpha)(k-1)x_{u_1}.
\end{eqnarray*}
Note that $K_{k-1} \nabla \overline{K}_{n-k+1}$ is a  subgraph of $G$. By Lemma~\ref{Cor-Spec3}, we have
$$\rho_\alpha\geq\rho_\alpha(K_{k-1} \nabla \overline{K}_{n-k+1})\geq \alpha(n-1)+(1-\alpha)(k-2)\geq\alpha(k+d-2)+(1-\alpha)(d-1).$$
Let
%\begin{equation}\label{1.3}
%f(\rho_\alpha)\leq0,
%\end{equation} where
 \begin{eqnarray*}
 % \nonumber to remove numbering (before each equation)
   &&f_\alpha(x) =x^2-\Big(\alpha n+k+d-3\Big)x+\\
    && \Big(\alpha(n-k+1)+k-2)\Big)\Big(\alpha(k-1)+d-1\Big)-(1-\alpha)^2(k-1)(n-k+1).
 \end{eqnarray*}
Then $\rho_\alpha$ is no more than the largest root of $f_\alpha(x)=0$.
 %In particular, if $\alpha=0$ then
% $$\rho_\alpha\leq  \frac{d+k-3+\sqrt{(d-k+1)^2+4(k-1)(n-k+1)}}{2}.$$
% If $\alpha=\frac{1}{2}$, then
% $$\rho_{\frac{1}{2}}\leq  \frac{d+k-3+\sqrt{(d-k+1)^2+4(k-1)(n-k+1)}}{2}.$$
If $\rho_\alpha$ is  equal to  the largest root of $f_\alpha(x)=0$, then all equalities in (\ref{1.1}) and (\ref{1.2}) hold. So  $d(v)=k+d-2$ and $x_z = x_v$ for any vertex
$z \in V (H)$. Since for any $z\in V(H)$,
\begin{eqnarray*}
% \nonumber to remove numbering (before each equation)
 (\rho_\alpha-\alpha d(z)) x_z&=& (1-\alpha)(k-1)x_{u_1}+ (1-\alpha)\sum_{uz\in E(H)}x_u\\
 & \leq& (1-\alpha)(k-1)x_{u_1}+ (1-\alpha)(d-1)x_v=(\rho_\alpha-\alpha d(v)) x_v,
\end{eqnarray*}
we have $d(z)=d(v)=d+k-2$. So $H$ is $(d-1)$-regular.
\end{Proof}
\vspace{2mm}

\section{Graphs without minors}%Proof of Theorems~\ref{thm2} and \ref{thm3}
 We first present some structural lemmas for $K_r$ minor-free graphs and $K_{s,t}$ minor-free graphs respectively.

\begin{Lemma}\label{e-minor2}\cite{Tait2019}
Let $r\geq 3$ and  $G$ be a bipartite $K_r$ minor-free graph of order $n$  with vertex partition
$K$ and $T$. Let $|K| = k$ and $|T| = n - k$. Then there is an absolute constant $C$ depending
only on $r$ such that
$$e(G) \leq Ck + (r - 2)n.$$
In particular, if $|K| = o(n)$, then $e(G) \leq (r -2 + o(1))n$.
\end{Lemma}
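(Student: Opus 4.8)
The plan is to split the vertices of $T$ according to their degree and bound the two parts by completely different means. Put
\[
T_1 = \{\, v \in T : d(v) \le r - 2 \,\}, \qquad T_2 = \{\, v \in T : d(v) \ge r - 1 \,\}.
\]
Since every vertex of $T$ has all of its neighbours in $K$, we have $e(G) = e(T_1, K) + e(T_2, K)$. The first term is immediately under control: $e(T_1, K) \le (r-2)|T_1| \le (r-2)|T| \le (r-2)n$, which already produces the main term of the bound. Everything therefore reduces to showing that the edges incident with the high-degree vertices of $T$ contribute only $O_r(k)$, i.e. $e(T_2, K) \le Ck$ for some constant $C = C(r)$; since the degrees in $T_2$ are at least $r-1$, this is essentially the same as showing $|T_2| = O_r(k)$.

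The core of the argument is thus to bound $e(T_2,K)$, and here the $K_r$-minor-freeness must be used in an essential way. The cheap observation is that the bipartite graph $G[K \cup T_2]$ cannot contain $K_{r-1,r-1}$ as a subgraph: indeed $K_{r-1,r-1}$ already has a $K_r$ minor, obtained by pairing $r-2$ vertices of one side with $r-2$ vertices of the other into connected two-vertex branch sets and leaving the remaining vertex on each side as a singleton branch set. However, $K_{r-1,r-1}$-freeness alone, fed into Kővári–Sós–Turán, yields only a bound polynomial in $k$, so minors must be extracted more cleverly: if $e(T_2,K)$ (equivalently $|T_2|$) were too large relative to $k$, one builds a $K_r$ minor whose "core" branch sets are single vertices of $K$ and whose connectors run through $T_2$. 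Concretely one repeatedly selects a vertex of $T_2$, records its neighbourhood inside $K$, and uses a sunflower/pigeonhole argument together with the lower bound $d(v)\ge r-1$ for $v\in T_2$ to locate a bounded configuration in $K\cup T_2$ realising $K_r$ as a minor; alternatively one invokes the known fact that a bipartite host graph of sufficiently large average degree contains a large clique minor, applied to $G[K\cup T_2]$ after discarding a negligible portion of $T_2$. Either route gives $|T_2| \le C'k$ and hence $e(T_2,K) \le Ck$.

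Combining the two estimates yields $e(G) \le Ck + (r-2)n$, and the "in particular" clause then follows at once: if $|K| = k = o(n)$ then $Ck = o(n)$, so $e(G) \le (r-2+o(1))n$.

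The step I expect to be the genuine obstacle is precisely the bound $e(T_2,K) = O_r(k)$: the naive counting — $K_{r-1,r-1}$-freeness plus Zarankiewicz, or a sunflower count over the $(r-1)$-subsets of $K$ — only collapses the number of high-degree vertices of $T$ to a polynomial in $k$, so one really needs the full strength of "no $K_r$ minor" (for instance through a density-forces-minor statement that is linear in $r$ on bipartite, or $C_4$-sparse, host graphs) to bring the exponent of $k$ down to $1$. Once that structural input is available, the remaining argument is the bookkeeping described above.
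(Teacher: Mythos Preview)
The paper does not prove this lemma at all: it is quoted verbatim from Tait~\cite{Tait2019} and used as a black box, so there is no in-paper argument to compare your proposal against.

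Your outline has the right architecture and in fact matches the structure of Tait's original proof: split $T$ at the degree threshold $r-2$, absorb $T_1$ into the $(r-2)n$ main term, and then show $|T_2|=O_r(k)$. You have correctly located the entire content of the lemma in that last step, and you explicitly flag it as unfinished --- which it is. Neither of the two routes you sketch is actually carried out. The ``sunflower/pigeonhole'' description is too vague to produce a $K_r$ minor: recording $(r-1)$-subsets of $K$ and finding many repetitions or a sunflower among them does not by itself yield $r$ pairwise-adjacent branch sets. And the ``density forces a minor'' route fails quantitatively as stated: the Kostochka--Thomason extremal function for $K_r$ minors is of order $r\sqrt{\log r}$, which exceeds $r-1$, so the inequality $(r-1)|T_2|\le e(G[K\cup T_2])\le c\,r\sqrt{\log r}\,(k+|T_2|)$ does not pin $|T_2|$ against $k$. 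One needs a sharper argument exploiting the bipartite structure (every vertex of $T_2$ sees at least $r-1$ vertices of $K$) to build the minor greedily; this is precisely what Tait does, and it is the missing ingredient in your write-up.

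In short: the decomposition and the bookkeeping are correct, but the one nontrivial step --- $|T_2|\le C(r)\,k$ --- is asserted rather than proved, and the heuristics you offer for it do not close the gap.
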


\begin{Lemma}\label{e-minor3}\cite{Tait2019}
 Let $G$ be a $K_r$ minor-free graph of order $n$. Assume that $(1-2\delta)n > r$, and $(1 - \delta)n > \binom{r-2}{2} +2$, and that there is a set $K$ with $|K| = r - 2$ and a set $T$ with $|T| = (1- \delta)n$ such that every vertex in $K $ is adjacent to every vertex in $T$. Then
we may add edges to $K$ to make it a clique and the resulting graph is still $K_r$ minor-free.
\end{Lemma}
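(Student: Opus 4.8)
The plan is to prove the statement via its contrapositive together with a rerouting argument. Since $G$ is itself $K_r$ minor-free and $G$ is a subgraph of the completed graph $G'$ obtained by adding the missing edges inside $K$, it suffices to show that completing $K$ to a clique creates no new $K_r$ minor. So I would assume that $G'$ contains a $K_r$ minor and then produce a $K_r$ minor already present in $G$, contradicting the hypothesis that $G$ is $K_r$ minor-free.

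First I would fix a \emph{vertex-minimal} family of branch sets $B_1,\dots,B_r$ realizing a $K_r$ minor in $G'$; by minimality I may assume each $B_i$ induces a tree and that exactly one edge of $G'$ is used between each pair of branch sets. The crucial structural point is that the only edges of $G'$ not already present in $G$ lie inside $K$, and $|K|=r-2$. Hence the vertices of $K$ are distributed among at most $r-2$ of the $r$ branch sets, so at least two branch sets are disjoint from $K$; these two, and every cross-adjacency and internal connection not touching $K$, are already realized in $G$ and need no modification. Every \emph{use} of a newly added edge — whether to keep some $B_i$ connected or to supply the single required edge between two branch sets — is therefore an edge $uv$ with both $u,v\in K$, and there are at most $\binom{r-2}{2}$ distinct such edges.

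The heart of the argument is a rerouting step that exploits the domination hypothesis: every vertex of $K$ is adjacent \emph{in $G$} to every vertex of $T$. Thus each new edge $uv$ with $u,v\in K$ can be simulated in $G$ by the path $u-w-v$ for any $w\in T$. If $uv$ served internal connectivity of a single $B_i$, I add $w$ to $B_i$, and since $w$ is adjacent to both $u$ and $v$, connectivity of $B_i$ is restored inside $G$. If instead $uv$ was the edge joining two branch sets $B_i\ni u$ and $B_j\ni v$, I add $w$ to $B_i$; then $w$ is adjacent to $u\in B_i$ (keeping $B_i$ connected) and to $v\in B_j$ (restoring the cross-adjacency), again using only edges of $G$. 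Performing these substitutions simultaneously, with one \emph{private} fresh vertex $w\in T$ per new edge, turns $B_1,\dots,B_r$ into branch sets of a $K_r$ minor that uses no added edges: disjointness is preserved because each $w$ is used once and chosen outside the current branch sets, while connectivity and all $\binom{r}{2}$ cross-adjacencies hold by construction.

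What remains — and what I expect to be the main obstacle — is guaranteeing that enough fresh vertices of $T$ are available, which is precisely where the two size hypotheses enter. The number of private vertices needed is at most the number of new edges, namely $\binom{r-2}{2}$, in addition to the two branch sets that must be realized entirely off $K$; the bound $(1-\delta)n=|T|>\binom{r-2}{2}+2$ is the natural threshold that makes this possible. The delicate part is controlling how much of $T$ the minor model already occupies: I would argue that a vertex-minimal $K_r$ minor uses only a bounded number of vertices (a function of $r$ alone, since each branch set is a tree with at most $r-1$ attachment points), so that after deleting the bounded set of model-vertices lying outside $T$, the surplus guaranteed by $(1-2\delta)n>r$ leaves untouched vertices of $T$ to serve as the private $w$'s without colliding with the two $K$-free branch sets. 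Granting this counting, the rerouted family is a $K_r$ minor in $G$, the desired contradiction; hence $G'$ is $K_r$ minor-free, and $K$ may be completed to a clique as claimed.
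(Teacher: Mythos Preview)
The paper does not give its own proof of this lemma; it is quoted verbatim from Tait (2019) and used as a black box, so there is nothing in the paper to compare your argument against.

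That said, your outline contains a genuine gap. The step you flag as ``the main obstacle'' is indeed where the argument breaks: your assertion that ``a vertex-minimal $K_r$ minor uses only a bounded number of vertices (a function of $r$ alone, since each branch set is a tree with at most $r-1$ attachment points)'' is simply false. A tree with at most $r-1$ leaves can have arbitrarily many internal degree-$2$ vertices; concretely, in a long subdivision of $K_r$ every $K_r$ model must use all subdivision vertices, so the minimal model has size tending to infinity. In the situation at hand nothing prevents the branch sets $B_1,\dots,B_r$ from swallowing essentially all of $T$, and then no private $w$'s are available for your rerouting.

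The repair has to use the hypothesis that $K$ is completely joined to $T$ in $G$, not just in the rerouting step but already to control the geometry of the model. First observe that $T$ must be independent in $G$: an edge $tt'$ inside $T$ together with the $r-2$ pairs $\{k_i,s_i\}$, $s_i\in T\setminus\{t,t'\}$, already yields a $K_r$ minor in $G$. By the same construction with $\{v,t\},\{t'\}$ in place of $\{t\},\{t'\}$, every vertex of $R=V\setminus(K\cup T)$ has at most one neighbour in $T$. These two facts force any connected set $A\subseteq T\cup R$ with $|A|\ge 2$ to satisfy $|A\cap T|\le |A\cap R|$, so the two $K$-free branch sets $B_1,B_2$ together meet $T$ in at most $|R|+2\le \delta n+2$ vertices, and $|T\setminus(B_1\cup B_2)|\ge (1-2\delta)n-2>r-2$. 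This is where the hypothesis $(1-2\delta)n>r$ actually enters; your sketch never reaches it. One then builds the $K_r$ minor in $G$ directly as $\{k_1,t_1\},\dots,\{k_{r-2},t_{r-2}\},B_1,B_2$ with $t_i\in T\setminus(B_1\cup B_2)$, after first arranging (via a small case analysis) that each of $B_1,B_2$ meets $T$. The rerouting of all $\binom{r-2}{2}$ new edges through private $T$-vertices is not needed, and the bound $(1-\delta)n>\binom{r-2}{2}+2$ is used only in a coarse way; but the argument does not go through without the structural observations above.
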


\begin{Lemma}\label{e-minor5}\cite{Tait2019,Thomason2007}
Let $t\geq s \geq2$ and $G$ be a bipartite $K_{s,t}$ minor-free graph of order $n$  with vertex partition
$K$ and $T$. Let $|K| = k$ and $|T| = n - k$. Then there is an absolute constant $C$ depending
only on $s$ and $t$ such that
$$e(G) \leq Ck + (s - 1)n.$$
In particular, if $|K| = o(n)$, then $e(G) \leq (s -1 + o(1))n$.
\end{Lemma}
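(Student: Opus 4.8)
The plan is to bound $e(G)$ by splitting the vertices of $T$ according to their degree: the vertices of small degree will account for exactly the $(s-1)n$ term, while those of large degree will, after a contraction, be controlled by the known linear bound on the number of edges of a $K_{s,t}$-minor-free graph.

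First I would write $e(G)=\sum_{v\in T}d_G(v)$ and set $T_{<s}=\{v\in T:\ d_G(v)\le s-1\}$, $T_{\ge s}=T\setminus T_{<s}$, and $G'=G[K\cup T_{\ge s}]$. Since $G$ is bipartite with parts $K,T$, every edge at a vertex of $T_{\ge s}$ lies in $G'$, so $e(G)=\sum_{v\in T_{<s}}d_G(v)+e(G')\le(s-1)|T_{<s}|+e(G')$. In $G'$ every vertex of $T_{\ge s}$ has degree $\ge s$, and $|K|=k$. For each $v\in T_{\ge s}$ choose a neighbour $u_v\in K$ and contract the edge $vu_v$, discarding the loop it creates (parallel edges are retained). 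Because $T_{\ge s}$ is independent and each contraction merges a $T$-vertex into a $K$-vertex without altering any other $T$-vertex's neighbourhood, these operations may be performed one after another, and the result is a multigraph $M$ on at most $k$ vertices which is a minor of $G'$, with $e(M)=\sum_{v\in T_{\ge s}}\bigl(d_{G'}(v)-1\bigr)=e(G')-|T_{\ge s}|$. Hence $e(G')=e(M)+|T_{\ge s}|$, and since $s-1\ge1$,
$$e(G)\ \le\ (s-1)|T_{<s}|+e(M)+|T_{\ge s}|\ \le\ (s-1)(n-k)+e(M).$$
Thus the whole statement reduces to showing $e(M)\le C(s,t)\,k$ for a constant $C(s,t)$ depending only on $s,t$.

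For this, the simple graph $\widehat M$ underlying $M$ has no $K_{s,t}$ minor (it is a subgraph of the minor $M$ of $G'$) and has at most $k$ vertices, so by the known linear bound on the number of edges of a $K_{s,t}$-minor-free graph (Thomason) we get $e(\widehat M)\le h(s,t)\,k$. Consequently $e(M)\le \mu\cdot e(\widehat M)\le \mu\,h(s,t)\,k$, where $\mu$ is the largest edge multiplicity of $M$, and it remains to arrange the contractions so that $\mu$ is bounded in terms of $s,t$ only; note $\mu$ is at most the largest codegree $\max_{a,b\in K}|N_{G'}(a)\cap N_{G'}(b)|$ in $G'$. If $s=2$ this finishes the proof at once: a pair $a,b\in K$ with $t$ common neighbours in $T_{\ge s}$ would be a $K_{2,t}$ subgraph of $G'$, contradicting that $G'$ has no $K_{2,t}$ minor, so $\mu\le t-1$.

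The case $s\ge3$ is the genuinely delicate point, since a pair $a,b\in K$ can have arbitrarily many common neighbours while $G'$ stays $K_{s,t}$-minor-free. To handle it I would use the extra freedom of also deleting, for each $v\in T_{\ge s}$, some $s-2$ of the $d_{G'}(v)-1$ edges created at $u_v$ (this only lowers $e(M)$, to $e(G')-(s-1)|T_{\ge s}|$, which still suffices above): one picks the deleted set $D_v$ (an $(s-1)$-subset of $N_{G'}(v)$ containing $u_v$) so as to route around heavy pairs — if $\{a,b\}$ has large codegree, put both $a$ and $b$ into $D_v$ for the common neighbours $v$, so that $v$ contributes nothing to the multiplicity of $\{a,b\}$. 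One then shows, via a greedy/extremal choice of the $D_v$ together with an induction on $s$ — deleting a heavy pair $\{a,b\}$ and restricting to its common neighbours leaves a bipartite $K_{s-1,t}$-minor-free graph on fewer than $k$ vertices whose $T$-side has degrees $\ge s-1$, to which the inductive bound applies, since appending the branch set $\{a\}$ to any $K_{s-1,t}$ minor there produces a $K_{s,t}$ minor in $G'$ — that the multiplicity not removable in this way is bounded by a function of $s$ and $t$. This gives $e(M)\le C(s,t)\,k$, hence $e(G)\le(s-1)(n-k)+C(s,t)k\le Ck+(s-1)n$, and the final assertion follows by taking $k=|K|=o(n)$. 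One could alternatively bypass the multiplicity argument by invoking the structure theorems for extremal $K_{s,t}$-minor-free graphs (Myers--Thomason, Kostochka--Prince), which present them as a bounded-size dense part joined to a sparse remainder and yield the bipartite estimate directly. I expect the multiplicity/codegree control for $s\ge3$ to be the main obstacle.
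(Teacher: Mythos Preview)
The paper does not prove this lemma at all: it is quoted verbatim from \cite{Tait2019,Thomason2007} and used as a black box, so there is no ``paper's own proof'' to compare against. What you have written is an attempt to reconstruct the argument from those references.

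Your reduction to bounding $e(M)\le C(s,t)k$ is clean, and the $s=2$ case is complete: the codegree bound $|N(a)\cap N(b)\cap T_{\ge 2}|\le t-1$ (else a $K_{2,t}$ subgraph appears) caps the multiplicity, and then $e(M)\le (t-1)\,e(\widehat M)\le (t-1)h(2,t)k$.

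For $s\ge 3$ there is a real gap, and it is precisely where you flag it. The inductive step you sketch does not go through as stated. You assert that a $K_{s-1,t}$ minor in the bipartite graph on $(K\setminus\{a,b\})\cup T_{ab}$ (with $T_{ab}$ the common neighbourhood of a heavy pair $a,b$) can be upgraded to a $K_{s,t}$ minor in $G'$ by appending the branch set $\{a\}$. But for this you need $a$ to be adjacent to every branch set on the $t$-side of that minor, i.e.\ each such branch set must meet $T_{ab}$. In a bipartite graph a branch set lying entirely in $K\setminus\{a,b\}$ is a single vertex $c$, and $a$ has no edge to $c$; nothing you have said rules this configuration out. (If instead all $(s-1)$-side branch sets meet $T_{ab}$, appending $\{a\}$ to the $t$-side gives only a $K_{s-1,t+1}$ minor, which need not contain $K_{s,t}$.) The ``delete $s-2$ further edges and choose $D_v$ greedily'' idea likewise needs an argument that some choice exists making every multiplicity bounded; you have not supplied one, and a naive greedy choice can fail when many $v$ share a common pair in their neighbourhood.

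So the proposal is correct for $s=2$ and reduces the general case to the right subproblem, but the $s\ge 3$ multiplicity control is not established; you would need either to repair the induction (e.g.\ by arguing more carefully about which side of the minor the singleton-in-$K$ branch sets can lie on, or by iterating the contraction rather than a single pass) or, as you note yourself, to invoke the bipartite extremal results of Thomason directly.
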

\begin{Lemma}\label{e-minor4}\cite{Mader1967}
 For any graph $H$, there is a constant $C$ such that if $G$ is an $H$ minor-free graph of order $n$ then
 $$e(G)\leq Cn.$$
\end{Lemma}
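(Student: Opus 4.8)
The plan is to reduce the statement to the special case $H=K_h$ and then establish a density bound by induction. First I would observe that it suffices to treat $H=K_h$ with $h=|V(H)|$: since $H$ is a subgraph of $K_h$, it is in particular a minor of $K_h$, so any graph containing $K_h$ as a minor also contains $H$ as a minor. Taking the contrapositive, every $H$ minor-free graph is $K_h$ minor-free, and a linear edge bound for $K_h$ minor-free graphs immediately gives one for $H$ minor-free graphs. Thus it is enough to produce, for each $h$, a constant $c(h)$ such that every graph of average degree at least $c(h)$ contains $K_h$ as a minor; equivalently, every $K_h$ minor-free graph $G$ has average degree below $c(h)$, whence $e(G)<\tfrac{c(h)}{2}\,n$, and one may take $C=c(h)/2$.

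To obtain such a $c(h)$ I would argue by induction on $h$, proving that the threshold $c(h)=2^{h-2}$ works. The base cases $h\le 2$ are trivial, since a graph with at least one edge already contains $K_2$ as a minor. For the inductive step, let $G$ have average degree at least $2^{h-2}$, so that $e(G)\ge 2^{h-3}|V(G)|$, and then pass to a minor $G'$ of $G$ with $e(G')/|V(G')|\ge 2^{h-3}$ having as few vertices as possible. Since the density is at least $1$ (for $h\ge 3$), $G'$ has an edge. The key structural point is that this minimality forces every edge of $G'$ to have many common neighbors: contracting an edge $xy$ deletes exactly one vertex and $1+|N(x)\cap N(y)|$ edges, and because the contracted graph is a smaller minor its density must drop below $2^{h-3}$; comparing the two densities yields $|N(x)\cap N(y)|\ge 2^{h-3}$ for every edge $xy$ of $G'$.

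To finish the induction I would pick any vertex $v$ of $G'$ and examine the graph induced on its neighborhood $N(v)$. For each $u\in N(v)$ the edge $uv$ has at least $2^{h-3}$ common neighbors, all of which necessarily lie in $N(v)$; hence every vertex of $G'[N(v)]$ has at least $2^{h-3}$ neighbors inside $N(v)$, so $G'[N(v)]$ has minimum—and therefore average—degree at least $2^{h-3}=2^{(h-1)-2}$. By the induction hypothesis $G'[N(v)]$ contains a $K_{h-1}$ minor, whose branch sets all lie inside $N(v)$. Since $v$ is adjacent to every vertex of $N(v)$, adjoining $\{v\}$ as one further branch set produces a $K_h$ minor of $G'$, and hence of $G$, completing the induction and giving $c(h)=2^{h-2}$.

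The delicate step, and the heart of the argument, is the passage to a minor-minimal dense graph $G'$ together with the resulting common-neighbor bound. One must set up the contraction bookkeeping carefully—each contraction removes precisely one vertex and $1+|N(x)\cap N(y)|$ edges—and verify that the strict drop in density forced by minimality yields $|N(x)\cap N(y)|\ge 2^{h-3}$ for every surviving edge. Everything else (the reduction to $K_h$, the choice of the density threshold, and the assembly of the larger minor from a neighborhood minor) is routine once this structural fact is in hand.
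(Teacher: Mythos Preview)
Your argument is correct and is essentially Mader's original proof; the paper itself does not supply a proof of this lemma but simply quotes it from \cite{Mader1967} as a black box, so there is nothing in the paper to compare against. Your reduction to $H=K_h$ together with the minor-minimal density argument and the threshold $c(h)=2^{h-2}$ is exactly the classical route to this result, and the bookkeeping you outline for the contraction step (losing one vertex and $1+|N(x)\cap N(y)|$ edges, hence $|N(x)\cap N(y)|\ge 2^{h-3}$ by minimality) is the standard and correct way to justify the key common-neighbour bound.
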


\vspace{2mm}
\noindent {\bf Proof of Theorem~\ref{thm2}.}
%\begin{Theorem}\label{}
%Let $r\geq 3$ and $G$ be a $K_r$ minor-free graph of sufficiently large order $n$.
%Then for any real number $0< \alpha <1$,
% $$\rho_\alpha(G)\leq \rho_\alpha(K_{r-2}\nabla K_{n-r+2})$$ with equality if and only if $G=K_{r-2}\nabla K_{n-r+2}$.
%\end{Theorem}
%\begin{Proof}
Let $G$ be a $K_r$ minor-free  graph of  sufficiently large order  $n$ with the maximum $\alpha$-index.

\vspace{2mm}
{\bf Claim~1.} $G$ is connected.

If $G$ is not connected, then we can add an edge to two components of $G$ to get a $K_r$-minor free graph with larger $\alpha$-index, a contradiction. This proves Claim~1.

\vspace{2mm}
Next let $\rho_\alpha=\rho_\alpha (G)$ and $\mathbf x=(x_v)_{v\in V(G)}$ with the maximum entry 1 be a positive eigenvector to $\rho_\alpha $. Choose an arbitrary $w\in V(G)$ with  $$x_w=\max\{x_v:v\in V(G)\}=1. $$ Set $L=\{v\in V(G): x_v> \epsilon\} $ and $S=\{v\in V(G): x_v\leq \epsilon\} $, where $\epsilon$ will be chosen later.

Since $K_{r-2}\nabla \overline{K}_{n-r+2}$ is $K_r$-minor free, by Lemma 2.1,

\begin{eqnarray}\label{I2}
% \nonumber to remove numbering (before each equation)
  \rho_\alpha &\geq& \rho_\alpha(K_{r-2}\nabla \overline{K}_{n-r+2})\geq \max\bigg\{\alpha n+\frac{2r-5-(2r-3)\alpha}{2\alpha},\alpha (n-1)\bigg\}.
\end{eqnarray}
By Lemma~\ref{e-minor4},  there is a constant $C_1$ such that
\begin{eqnarray}\label{I3}
% \nonumber to remove numbering (before each equation)
  2e(S)\leq2e(G)\leq C_1n.
\end{eqnarray}

\vspace{2mm}
%Let $C=\{v\in X:d(v)\geq \sum_{i=1}^k d_i+k-1\}$.  Since $G$ is $F$-free, $|C|\leq k-1$, otherwise we can embed an $F$ in $G$ by definition of $C$.
%Hence
%\begin{eqnarray*}
%% \nonumber to remove numbering (before each equation)
%  e(G) &=& \sum_{v\in C}d(v)+ \sum_{v\in V(G)\backslash C}d(v)\\
%   &\leq&(n-1)|C|+(n-|C|)\bigg(\sum_{i=1}^k d_i+k-2\bigg) \\
%   &=& \bigg(n- \sum_{i=1}^k d_i-k+1\bigg)|C|+\bigg(\sum_{i=1}^k d_i+k-2\bigg)n\\
%   &\leq&(k-1)\bigg(n- \sum_{i=1}^k d_i-k+1\bigg)+\bigg(\sum_{i=1}^k d_i+k-2\bigg)n\\
%   &=&\bigg(\sum_{i=1}^k d_i+2k-3\bigg)n-(k-1)\bigg(\sum_{i=1}^k d_i+k-1\bigg)\\
%   &\leq&\bigg(\sum_{i=1}^k d_i+2k-3\bigg)n
%\end{eqnarray*}
{\bf Claim~2.} There exists a constant $C_2$ such that $$|L|\leq  \frac{C_2(1-\alpha+\alpha\epsilon)}{\epsilon}.$$ In addition, $\epsilon$ can be chosen small enough that $$e(L,S)\leq (k-1+\epsilon)n.$$

By eigenequations of $A_\alpha$ on any vertex $u\in L$, we have
$$(\rho_\alpha-\alpha d(u))\epsilon<(\rho_\alpha-\alpha d(u)) x_u=(1-\alpha)\sum_{uv\in E(G)} x_v\leq (1-\alpha)d(u),$$
which implies that $$d(u)> \frac{\rho_\alpha\epsilon}{1-\alpha+\alpha\epsilon}.$$
Thus $$2e(G)=\sum_{u\in V(G)}d(u)\geq \sum_{u\in L }d(u)\geq \frac{|L|\rho_\alpha\epsilon}{1-\alpha+\alpha\epsilon},$$
which implies that
\begin{eqnarray}\label{I0}
% \nonumber to remove numbering (before each equation)
|L|\leq\frac{2e(G)(1-\alpha+\alpha\epsilon)}{\rho_\alpha\epsilon}.
\end{eqnarray}
For  sufficiently large $n$, there is a constant $C_2$ such that
$$C_2\geq \frac{2\alpha C_1}{2\alpha^2+\frac{2r-5-(2r-3)\alpha}{n}}.$$
Hence by (\ref{I2})-(\ref{I0}),
 \begin{eqnarray*}
 % \nonumber to remove numbering (before each equation)
   |L| &\leq&\frac{2e(G)}{\rho_\alpha}\cdot \frac{(1-\alpha+\alpha\epsilon)}{\epsilon}\leq \frac{C_1 n}{\alpha n+\frac{2r-5-(2r-3)\alpha}{2\alpha}}\cdot \frac{(1-\alpha+\alpha\epsilon)}{\epsilon}\\
   &=& \frac{2\alpha C_1}{2\alpha^2+\frac{2r-5-(2r-3)\alpha}{n}}\cdot\frac{1-\alpha+\alpha\epsilon}{\epsilon} \leq \frac{C_2(1-\alpha+\alpha\epsilon)}{\epsilon}.
 \end{eqnarray*}
% $$|L|\leq\frac{2e(G)(1-\alpha+\alpha\epsilon)}{\rho_\alpha\epsilon}\leq \frac{2\alpha}{2\alpha^2+\frac{2r-5-(2r-3)\alpha}{n}}\cdot\frac{1-\alpha+\alpha\epsilon}{\epsilon}\leq \frac{C_2(1-\alpha+\alpha\epsilon)}{\epsilon}.$$
Choose $\epsilon$ small enough such that $|L|\leq \epsilon n$. By Lemma~\ref{e-minor2}, $e(L,S)\leq (r-2+\epsilon)n$. This proves Claim~2.

By Claim~2, we can choose $\epsilon$ small enough such that $$2e(L)\leq C_1 |L|\leq  \frac{C_1C_2(1-\alpha+\alpha\epsilon)}{\epsilon}\leq \epsilon n.$$
%$$d(u)\geq \frac{\rho_\alpha\epsilon}{1-\alpha+\alpha\epsilon}>\bigg(\alpha n+\frac{2k-3-(2k-1)\alpha}{2\alpha}\bigg)\frac{\varepsilon}{1-\alpha+\alpha\epsilon}\geq \sum_{i=1}^kd_i+k-2,$$

%We first sum of Perron vector entry over $L$.
%\begin{eqnarray*}
%% \nonumber to remove numbering (before each equation)
%  \rho \sum_{v\in L} x_v &=&    \sum_{v\in L} \rho x_v=\sum_{v\in L} \sum_{z\in V(G)} a_{vz}x_z
%   \leq \sum_{v\in L}d_v\leq 2e(G)
%\end{eqnarray*}
%
%Hence $$\epsilon|L|\leq\sum_{v\in L} x_v\leq \frac{2e(G)}{\rho}\leq \frac{C_1n}{\sqrt{(k-1)(n-k+1)}}\leq \frac{C_1 n}{\sqrt{(k-2)n}}=\frac{C_1}{\sqrt{k-2}}\sqrt{n},$$
%Let $C_2=\frac{C_1}{\sqrt{k-2}}$. Then $|L|\leq C_2\sqrt{n}$. By Lemma~\ref{e-minor2}, $e(L,S)\leq (k-1+\epsilon)n.$

%\vspace{2mm}
%
%{\bf Claim~3.} There is a constant $C_3$ such that $2e(L)\leq C_3 \sqrt{n}$ and $2e(S)\leq C_3n$.
%
%Note that $|S|\geq n-C_1\sqrt{n}=(1-o(1))n$ for large $n$.  By Claim~1,
%$$2e(S)\leq C_1|S|\leq C_1n.$$
%By  Claims~1 and 2, if $|L|\geq \sum_{i=1}^k d_i+k$ then  $$2e(L)\leq C_1|L|\leq C_1C_2\sqrt{n}. $$
%If $|L|<\sum_{i=1}^k d_i+k$, then there is a constant $C_{31}$ such that $$e(L)=\binom{|L|}{2}\leq\binom{\sum_{i=1}^k d_i+k-1}{2}\leq C_{31}\sqrt{n}.$$ Let $C_3=\max\{C_1C_2,C_{31}\}$. Then $2e(L)\leq C_3\sqrt{n}$ and $2e(S)\leq C_3n$.

\vspace{2mm}
{\bf Claim~3.}
Let $u\in L$. Then for any $u\in L$, there is a constant $C_3$ such that
$$ d(u)\geq (1-C_3(1-x_u+\epsilon))n.$$

Since
\begin{eqnarray*}
% \nonumber to remove numbering (before each equation)
  &&\rho_\alpha \sum_{v\in V(G)}x_v\\
  &=& \sum_{v\in V(G)} \rho_\alpha x_v=\sum_{v\in V(G)}\bigg(\alpha d(v)x_v+(1-\alpha)\sum_{vz\in E(G)}x_z\bigg)\\
  &=&\alpha\sum_{v\in V(G)} d(v)x_v+(1-\alpha)\sum_{vz\in E(G)}(x_v+x_z)\\
  &=&\alpha\bigg( \sum_{v\in L} d(v)x_v+ \sum_{v\in S} d(v)x_v\bigg)+(1-\alpha)\bigg(\sum\limits_{vz\in E(L)}(x_v+x_z)+\sum\limits_{vz\in E(L,S)}(x_v+x_z)+\\
 && \sum\limits_{vz\in E(S)}(x_v+x_z) \bigg)\\
%  &=&\alpha \sum_{v\in L} d(v)x_v+\alpha \sum_{v\in S} d(v)x_v+(1-\alpha) \sum_{v\in V(G)}\sum_{vz\in E(G)}x_z\\
  &\leq&\alpha(2e(L)+e(L,S))+\alpha\epsilon (2e(S)+e(L,S))+(1-\alpha)(2e(L)+(1+\epsilon) e(L,S) + 2\epsilon e(S))\\
   &=&  2e(L)+2\epsilon e(S)+(1+\epsilon)e(L,S),
\end{eqnarray*}
we have
\begin{equation}\label{I1}
\begin{aligned}
 \sum_{v\in V(G)}x_v&\leq \frac{ 2e(L)+2\epsilon e(S)+(1+\epsilon)e(L,S)}{\rho_\alpha}\\
 &\leq \frac{ \epsilon n+\epsilon C_1n+(1+\epsilon)(r-2+\epsilon)n}{\rho_\alpha}\\
 &= \frac{ ((1+C_1)\epsilon +(1+\epsilon)(r-2+\epsilon))n}{\rho_\alpha}.
 \end{aligned}
\end{equation}
By eigenequations of $A_\alpha$ on  $u$, we have
\begin{eqnarray}\label{I4}
% \nonumber to remove numbering (before each equation)
(\rho_\alpha-\alpha d(u)) x_u&=&(1-\alpha)\sum_{uv\in E(G)} x_v\leq (1-\alpha)\sum_{v\in V(G)}x_v.
\end{eqnarray}
By  (\ref{I2}), (\ref{I1}), and (\ref{I4}), we have
\begin{eqnarray*}
% \nonumber to remove numbering (before each equation)
   d(u) &\geq& \frac{\rho_\alpha}{\alpha}-\frac{(1-\alpha)\sum_{v\in V(G)}x_v}{\alpha x_u} \\
  &\geq& \frac{\rho_\alpha}{\alpha}- \frac{(1-\alpha)( (1+C_1)\epsilon +(1+\epsilon)(r-2+\epsilon))n}{\rho_\alpha\alpha x_u}\\
   &\geq& n-1-\frac{(1-\alpha)( (1+C_1)\epsilon +(1+\epsilon)(r-2+\epsilon)}{\alpha^2 (1-\frac{1}{n})x_u}.
\end{eqnarray*}
Since $n$ is sufficiently large and $\epsilon$ is small enough, there is a constant $C_3$ such that
$$   d(u) \geq n-1-\frac{(1-\alpha)( (1+C_1)\epsilon +(1+\epsilon)(r-2+\epsilon)}{\alpha^2 (1-\frac{1}{n})x_u}\geq(1-C_3(1-x_u+\epsilon))n.$$

%Next we sum of eigenvector over all vertices in $B_u$ with aid of (1) and Lemma~\ref{spec1}. Since
%\begin{equation*}\label{I1}
%  \begin{aligned}
%% \nonumber to remove numbering (before each equation)
% \frac{1}{  \rho }|B_u|&\leq \sum_{v\in B_u}x_v=\sum_{v\in S} x_v-\sum_{\substack{v\in S\\uv\in E(G)}} x_v\\
%   &=  \sum_{v\in S}x_v-\bigg(\sum_{uv\in E(G)}x_v-\sum_{\substack{v\in L\\uv\in E(G)}}x_v\bigg)\\
%   &\leq \sum_{v\in S}x_v+  \sum_{v\in L}x_v- \sum_{uv\in E(G)}x_v\\
%   &=\sum_{v\in V(G)}x_v-\rho x_u\\
%   &\leq  \frac{ 2e(L)+2\epsilon e(S)+(1+\epsilon)e(L,S)}{\rho}-\rho x_u,
%     \end{aligned}
%\end{equation*}
% we have
%\begin{eqnarray*}
%% \nonumber to remove numbering (before each equation)
%  |B_u| &\leq&  2e(L)+2\epsilon e(S)+(1+\epsilon)e(L,S)-\rho^2x_u \\
%   &\leq&  2e(L)+2\epsilon e(S)+(1+\epsilon)e(L,S)-(k-1)(n-k+1)(1-\delta) \\
%   &\leq& C_3\sqrt{n}+\epsilon \cdot C_1n+(1+\epsilon)(k-1+\epsilon)n-(k-1)(n-k+1)(1-\delta)  \\
%  &=&  C_3\sqrt{n}+ C_3\epsilon n+(1+\epsilon)(k-1+\epsilon)n-(k-1)(1-\delta)n+ (k-1)^2(1-\delta)\\
%   &=& \Big(C_3\epsilon+(1+\epsilon)(k-1+\epsilon)-(k-1)(1-\delta)\Big)n+C_3\sqrt{n}+(k-1)^2(1-\delta)\\
% &=&  \Big(C_3\epsilon+\epsilon(k+\epsilon)+\delta(k-1)\Big)n+C_3\sqrt{n}+(k-1)^2(1-\delta)\\
%   &\leq& (C_3+k+\epsilon)(\delta +\epsilon)n+C_{41}(\delta+ \epsilon)n\\
%   &\leq& (C_3+C_{41}+k+1)(\delta+ \epsilon)n,
%\end{eqnarray*}
%where the last second inequality holds since $C_3\sqrt{n}+(k-1)^2(1-\delta)\leq C_{41}(\delta +\epsilon)n$ for some $C_{41}$ and large  $n$.
%
%Let $C_4=C_3+C_{41}+k+1$.  Then the result follows.

\vspace{2mm}

{\bf Claim~4.}
 Let $ 1 \leq s<r-2 $. Suppose that  there is a set $X$ of $s$ vertices such that $X=\{v\in V(G):x_v\geq 1-\eta ~\text{and} ~d(v)\geq(1 - \eta)n\} $, where $\eta$ is much smaller than $1$. Then there is a
constant $C_4$ and a vertex $v\in L \backslash X $ such that  $ x_{v}\geq 1 - C_4(\eta + \epsilon)$ and  $d(v)\geq (1 - C_4(\eta + \epsilon))n $.

\vspace{2mm}

By eigenequations of $A_\alpha$ on  $w$, we have
$$\rho_\alpha-\alpha d(w)=(\rho_\alpha-\alpha d(w)) x_w=(1-\alpha)\sum_{vw\in E(G)} x_v.$$
Multiplying both sides of the above inequality by $\rho_\alpha$, we have
\begin{eqnarray*}
% \nonumber to remove numbering (before each equation)
  &&\rho_\alpha(\rho_\alpha-\alpha d(w))\\
  &=& (1-\alpha)\sum_{vw\in E(G)} \rho_\alpha x_v \\
   &=& (1-\alpha)\sum_{vw\in E(G)} \bigg(\alpha d(v)x_v+(1-\alpha)\sum_{uv\in E(G)} x_u\bigg)\\
 &=& (1-\alpha)\sum_{vw\in E(G)} \alpha d(v)x_v+(1-\alpha)^2\sum_{vw\in E(G)}\sum_{uv\in E(G)} x_u\\
 &\leq&(1-\alpha)\bigg(\sum_{v\in V(G)} \alpha d(v)x_v-\alpha d(w)\bigg)+(1-\alpha)^2\sum_{uv\in E(G)}( x_u+x_v)-\\
 &&(1-\alpha)^2\sum_{vw\in E(G)}x_v\\
&=&\alpha(1-\alpha)\sum_{uv\in E(G)}( x_u+x_v)-\alpha(1-\alpha)d(w)+(1-\alpha)^2\sum_{uv\in E(G)}( x_u+x_v)-\\
&&(1-\alpha)(\rho_\alpha-\alpha d(w))\\
&=&(1-\alpha)\sum_{uv\in E(G)}( x_u+x_v)-(1-\alpha)\rho_\alpha,
\end{eqnarray*}
which implies that $$\sum_{uv\in E(G)}( x_u+x_v)\geq \frac{\rho_\alpha(\rho_\alpha+1-\alpha-\alpha d(w))}{1-\alpha}.$$
On the other hand,
\begin{eqnarray*}
% \nonumber to remove numbering (before each equation)
  \sum\limits_{uv\in E(G)}(x_u+x_v)&=&\sum\limits_{uv\in E(L,S)}(x_u+x_v)+\sum\limits_{uv\in E(S)}(x_u+x_v) +\sum\limits_{uv\in E(L)}(x_u+x_v)\\
   &\leq& \sum\limits_{uv\in E(L,S)}(x_u+x_v)+ 2\epsilon e(S)+2e(L)\\
   &\leq&\epsilon e(L,S)+\sum_{\substack{uv\in E( L\backslash X,S)\\u\in L\backslash X}}x_u+\sum_{\substack{uv\in E(L\cap X,S)\\u\in L\cap X }}x_u+2\epsilon e(S)+2e(L).
\end{eqnarray*}
Let  $ t=|L\cap X|$. Combining with (\ref{I2}), we have
\begin{eqnarray*}
% \nonumber to remove numbering (before each equation)
 &&\sum_{\substack{uv\in E( L\backslash X,S)\\u\in L\backslash X}}x_u\\
 &\geq&  \frac{\rho_\alpha(\rho_\alpha+1-\alpha-\alpha d(w))}{1-\alpha}- 2\epsilon e(S)-2e(L)-\epsilon e(L,S)-\sum_{\substack{uv\in E(L\cap X,S)\\u\in L\cap X }}x_u\\
    % \nonumber to remove numbering (before each equation)
       &\geq& \bigg(\frac{\alpha n}{1-\alpha}+\frac{2r-5-(2r-3)\alpha}{2\alpha(1-\alpha)}\bigg)\bigg(\alpha n+\frac{2r-5-(2r-3)\alpha}{2\alpha}+1-\alpha-\alpha n+\alpha\bigg)-\\
       &&\epsilon C_1n-\epsilon n-\epsilon(r-2+\epsilon) n-tn\\
       &=&\bigg(\frac{\alpha n}{1-\alpha}+\frac{2r-5-(2r-3)\alpha}{2\alpha(1-\alpha)}\bigg)\frac{(2r-5)(1-\alpha)}{2\alpha}-(\epsilon C_1+\epsilon +\epsilon(r-2+\epsilon) +t)n\\
       &=&\bigg(r-\frac{5}{2}-t-\epsilon(C_1+\epsilon+r-1)\bigg)n+\frac{(2r-5)^2-(2r-3)(2r-5)\alpha}{4\alpha^2}\\
       &\geq&\bigg(r-\frac{5}{2}-t-\epsilon(C_1+\epsilon+r)\bigg)n
    \end{eqnarray*}
 In addition,% $$|Y|=e(L,S)-e(X, S)\leq (k-1)(n-k+1)-s(1-\eta)n+\binom{k-2}{2}.$$
\begin{eqnarray*}
% \nonumber to remove numbering (before each equation)
  e(L\backslash X,S)&=& e(L,S)-e(L\cap X, S) \\
   &\leq & (r-2+\epsilon)n-t(1-\eta)n+t(t-1)+t(|L|-t) \\
   &\leq&(r-2+\epsilon)n-t(1-\eta)n+t(t-1)+t(\epsilon n-t)\\
 &\leq&(r-2+2\epsilon-t(1-\eta-\epsilon))n
\end{eqnarray*}
Note that for any $\eta>0$, there exists a constant $C_{4}'$ such that $C_4'\eta\geq \frac{1}{2}$.
%Let $$f(s)=\frac{r-\frac{5}{2}-s-\epsilon(C_1+\epsilon+r)}{r-2+2\epsilon-s(1-\eta)}.$$  It is easy to see that $f(s)$ is decreasing with respect to $1\leq s\leq k-2$.
%and $N=\{vw\in E(L,S): \{v,w\}\cap K\neq \emptyset\}$.  By  definition and Claim~4, $$|N|\geq s(1 - \eta)n$$ and
%$$|M|=e(L,S)-|N| \leq (k-1+\epsilon)n-s(1 - \eta)n=(k-1+\epsilon-s+s \eta)n.$$
%The definition of $K $ and  Lemma~\ref{e-minor2} give that the number of edges with one endpoint
%in $S $ and one endpoint in $L$ which is not in $K$ is at most $(r - 2 + o(1))n - k(1 -\eta)n$.
Then there is a vertex $v\in L\backslash X$ such that \begin{eqnarray*}
% \nonumber to remove numbering (before each equation)
 x_v&\geq &\frac{\sum_{\substack{uv\in E( L\backslash X,S)\\u\in L\backslash X}}x_u}{e(L\backslash X,S)} \\
 &\geq& \frac{(r-\frac{5}{2}-t-\epsilon(C_1+\epsilon+r))n}{(r-2+\epsilon-t(1-\eta-\epsilon))n}\\
     &=&\frac{r-\frac{5}{2}-t-\epsilon(C_1+\epsilon+r)}{r-2+\epsilon-t(1-\eta-\epsilon)}\\
     &=&1-\frac{\frac{1}{2}+t\eta+\epsilon(C_1+\epsilon+t+1)}{r-2+\epsilon-t(1-\eta-\epsilon)}\\
     &\geq&1-\frac{\frac{1}{2}+(r-3)\eta+\epsilon(C_1+\epsilon+r-2)}{r-2+\epsilon-(r-3)(1-\eta-\epsilon)}\\
      &=&1-\frac{\frac{1}{2}+(r-3)\eta+\epsilon(C_1+\epsilon+r-2)}{1+(r-2)\epsilon+(r-3)\eta}\\
      &\geq &1-\frac{\max\{C_1+\epsilon+r-2,C_4'+r-3\}}{1+(r-2)\epsilon+(r-3)\eta}(\eta+\epsilon)
\end{eqnarray*}
By Claim~3, Claim~4 follows directly.

\vspace{2mm}
If we start with $w$ and iteratively apply Claim~4,   then for any $\delta> 0$,
we can choose $\epsilon$ small enough that $G$ contains a set $X$ with  $r-2$ vertices such that their common
neighborhood of size is at least $(1 - \delta)n$ and  each eigenvector entry is at least $1 - \delta$.
From now on, denote by $K$  the set $X$ with  $r-2$ vertices  mentioned above. Let $T$ be the common neighborhood of $K$ and $R=V(G)\backslash (K\cup T)$.
Clearly,  $|K|=r-2$, $|T|\geq (1 - \delta)n$, and $|R|\leq \delta n$.

\vspace{2mm}
{\bf Claim~5.}  $K$ induces a clique and $T$ is an independent set.

If $K$ does not induce a clique,  then we can add all possible edges to make it a clique. By Lemma~\ref{e-minor3}, the resulting graph $G'$ is still $K_r$ minor-free.
Since $G$ is connected,  $\rho_\alpha(G')>\rho_\alpha(G)$, a contradiction. Hence $K$ induces a clique.
If there is an edge in $T$, then there is a $K_r$ minor in $G$,  a contradiction. Thus $T$ is an independent set. This proves Claim~5.

\vspace{2mm}
{\bf Claim~6.} For any $v\in V(G)\backslash K$, we have $x_v\leq \frac{\sqrt{\alpha}}{C_1}$, where $C_1$ is the constant in  (\ref{I3}).

Since  $G$ is  $K_r$-minor free, any vertex in $R$ can be adjacent to at most one vertex in $T$. By the definition of $R$, every vertex in $R$ can be adjacent to at most $r-3$ vertices in $K$.  In addition, by Claim~5, $T$ is an independent set and thus any vertex in $T$ has at most $r-2+|R|$ neighbors. Hence for any vertex $v\in V(G)\backslash K$,
\begin{eqnarray}\label{1}
% \nonumber to remove numbering (before each equation)
  d(v)&\leq& r-2+|R|\leq r-2+\delta n.
\end{eqnarray}
  Since $R$ is also $K_r$-minor free, we have
  $$2e(R)\leq C_1|R|\leq C_1\delta n.$$
By eigenequations of $A_\alpha(G)$, we have
\begin{eqnarray*}
% \nonumber to remove numbering (before each equation)
\alpha(n-1)  \sum_{u\in  R}x_u&\leq&\rho_\alpha\sum_{u\in  R}x_u = \sum_{u\in  R}\bigg(\alpha d(u)x_u+(1-\alpha) \sum_{uv\in E(G)}x_v\bigg)\\
  &\leq& \sum_{u\in  R}( \alpha d(u)+(1-\alpha)d(u))
   = \sum_{u\in  R} d(u)\\
   &\leq&2e(R)+(r-2)|R|\leq C_1\delta n+(r-2)\delta n\\&=&(C_1+r-2)\delta n,
\end{eqnarray*}
which implies that
\begin{equation}\label{2}
\sum_{u\in  R}x_u \leq  \frac{(C_1+r-2)\delta n }{ \alpha(n-1)}.
\end{equation}
By eigenequations of $A_\alpha(G)$ on  any vertex $v\in V(G)\backslash K$, we have
\begin{eqnarray}\label{3}
% \nonumber to remove numbering (before each equation)
 (\rho_\alpha -\alpha d(v))x_v &=& (1-\alpha) \sum_{uv\in E(G)}x_u
  \leq (1-\alpha)\bigg(r-2+ \sum_{u\in  R}x_u\bigg).
\end{eqnarray}
By  Lemma~\ref{e-minor3} and  (\ref{1})-(\ref{3}), we have
\begin{eqnarray*}
% \nonumber to remove numbering (before each equation)
  x_v&\leq&\frac{(1-\alpha)(r-2+ \sum_{u\in  R}x_u )}{\rho_\alpha -\alpha  d(v)}\\
  &\leq&\frac{(1-\alpha)\bigg(r-2+ \frac{(C_1+r-2)\delta n }{\alpha(n-1)}\bigg)}{\alpha(n-1) -\alpha ( r-2+\delta n)} \\
   &=&  \frac{(1-\alpha)\bigg(r-2+ \frac{(C_1+r-2)\delta  }{\alpha(1-\frac{1}{n})}\bigg)}{\alpha ( (1-\delta) n-r+1)}\\
\end{eqnarray*}
Then we can choose $\epsilon$ small enough to make $\delta$ small enough to get the result.

\vspace{2mm}
{\bf Claim~7.}  $R$ is empty.

If $R$ is not empty, then there exists a vertex $v\in R$ such that $v$ has at most $C_1$ neighbors in $R$.
Let $H$ be a graph obtained from $G$ by removing all edges incident with $v$ and then connecting $v$ to each vertex in $K$.
Since $K$ induces a clique, $H$ is still $K_r$ minor-free. Let $u\in K$ be the vertex not adjacent to $v$. Then
\begin{eqnarray*}
% \nonumber to remove numbering (before each equation)
&&  \rho_\alpha(H)- \rho_\alpha\\
   &\geq& \frac{{\bf x}^T A_\alpha(H){\bf x}}{ {\bf x}^T{\bf x}}-\frac{{\bf x}^T A_\alpha{\bf x}}{ {\bf x}^T{\bf x}}\\
   &\geq& \frac{1}{{\bf x}^T{\bf x}}\bigg(\alpha x_u^2+2(1-\alpha)x_ux_v+\alpha x_v^2-\sum_{vz\in E(G), z\notin K} (\alpha x_z^2+2(1-\alpha)x_vx_z+\alpha x_z^2)\bigg)\\
   &\geq&\frac{1}{{\bf x}^T{\bf x}}\bigg(2\alpha (1-\delta)^2)-\frac{\alpha(C_1+1)(\alpha+2(1-\alpha)+\alpha)}{C_1^2} \bigg)\\
   &=&  \frac{2\alpha}{{\bf x}^T{\bf x}}\bigg( (1-\delta)^2-\frac{C_1+1}{C_1^2}\bigg)
\end{eqnarray*}
Choose $\epsilon$ small enough so that $(1-\delta)^2> \frac{C_1+1}{C_1^2}$. Then
$\rho_\alpha(H)>\rho_\alpha$, a contradiction. This proves Claim~7.
%\end{Proof}

By Claims~6 and 7, $G=K_{r-2}\nabla \overline{K}_{n-r+2}$. This completes the proof.
\QEDB
\vspace{3mm}

%\section{Proof of Theorem~\ref{}}
\noindent {\bf Proof of Theorem~\ref{thm3}.}
Let $G$ be a $K_{s,t}$ minor-free  graph of   order  $n$ with the maximum $\alpha$-index.

Similarly to the proof of Claim~1 in Theoren~\ref{thm2}, $G$ is connected.
Next let $\rho_\alpha=\rho_\alpha (G)$ and $\mathbf x=(x_v)_{v\in V(G)}$ be a positive eigenvector to $\rho_\alpha $ such that  $w\in V(G)$ and $$x_w=\max\{x_u:u\in V(G)\}=1.$$

Set $L=\{v\in V(G): x_v> \epsilon\} $ and $S=\{v\in V(G): x_v\leq \epsilon\} $, where $\epsilon$ will be chosen later.

\vspace{2mm}
{\bf Claim~1.}  For any $\delta> 0$,  if we choose $\epsilon$  small enough, then
 $G$ contains a set $K$ with  $s-1$ vertices such that their common
neighborhood of size is at least $(1 - \delta)n$ and  each eigenvector entry is at least $1 - \delta$.

We omit the proof of  Claim~1 as it is similar to the proofs of  of Claims~2-4  in Theorem~\ref{thm2}.

\vspace{2mm}

Let $T$ be the common neighborhood of $K$ and $R=V(G)\backslash (K\cup T)$.
%Clearly,  $|K|=s-1$, $|T|\geq (1 - \delta)n$, and $|R|\leq \delta n$.

{\bf Claim~2.}  $R$ is empty.

Noting  any vertex in $R\cup T$ has at most $t-1$  neighbors in $R\cup T$ as $G$ is $K_{s,t}$ minor-free. In addition,  noting the graph obtained from $G$ by adding a vertex adjacent to every vertex in $K$ is still $K_{s,t}$ minor-free.  The proof of Claim~2 is similar to the proofs of Claims~6 and 7. Hence it is omitted here.

\vspace{2mm}
Now $|K|=s-1$ and $|T|=n-s+1$. Let $H$ be the subgraph of $G$ induced by $T$. Now $G=G[K]\nabla H$. Since $G$ is $K_{s,t}$ minor-free, $\Delta(H)\leq t-1$. %Let $G'$ be the graph obtained from $G$ by  adding edges to $K$ to make it a clique. Then $$\rho_\alpha(G)\leq \rho_\alpha (G')$$
%with equality  if and only if $K$ induces a clique in $G$. By Lemma~\ref{spec2}, $\rho_\alpha(G')$ is no more than the largest root of $f_\alpha(x)=0$, where $$x^2-(\alpha n+s+t-3)x+(\alpha(n-s+1)+s-2))(\alpha(s-1)+t-1)-(1-\alpha)^2(s-1)(n-s+1)=0$$ and equlity holds if and  only if $G'=K_{s-1}\nabla H$, where $H$ is a $(t-1)$-regular garph. %Let $\mathcal{G}_{s,t}=\{G: G=K_{s-1}\nabla H, \mbox{where $H$ is a $(t-1)$-regular graph of order $n-s+1$}\}$.
%Hence $\rho_\alpha (G)$ is no more than the largest root of $f_\alpha(x)=0$. If  euquality holds, then $G=K_{s-1}\nabla H$, where $H$ is a $(t-1)$-regular garph.
%Hence we just need to prove that equality can hold if and only if $G=K_{s-1}\nabla pK_t$, where $n-s+1=pt$.

First suppose that $K$ induces a clique. By Lemma~\ref{spec2}, $\rho_\alpha(G)$ is no more than the largest root of $f_\alpha(x)=0$, where $$x^2-(\alpha n+s+t-3)x+(\alpha(n-s+1)+s-2))(\alpha(s-1)+t-1)-(1-\alpha)^2(s-1)(n-s+1)=0$$ and equality holds if and  only if $G=K_{s-1}\nabla H$, where $H$ is a $(t-1)$-regular graph.
 It suffices to  prove that equality can hold if and only if $G=K_{s-1}\nabla pK_t$, where $n-s+1=pt$.
 Suppose that $H$ has a connected component $H_1$ that is not isomorphic to $K_t$ and set $h:=|V(H_1)|$. Clearly $H_1$ is a $(t-1)$-regular graph of order $h\geq t+1$. If $h=t+1$, then any two nonadjacent vertices in $H$ have $t-1$ common neighbors, which combining with clique $K_{s-1}$  yields  $K_{s,t}$,  a contradiction. Thus $h\geq t+2$. Note that $G$ is $K_{s,t}$ minor-free, we have $H_1$ is $K_{1,t}$ minor-free.
 Hence $$e(H_1)\leq h+\frac{t(t-3)}{2},$$ see \cite{Ding2001}. However, since $H_1$ is  a $(t-1)$-regular graph of order $h$, we have $$e(H_1)=\frac{h(t-1)}{2}>h+\frac{t(t-3)}{2},$$ a contradiction. Hence $H$ is the union of disjoint complete graphs of order $t$, i.e.,  $G=K_{s-1}\nabla pK_t$, where $n-s+1=pt$.

Next suppose that $K$ does not induce a clique. Let $G'$ be the graph obtained from $G$ by  adding edges to $K$ to make it a clique.  Then $\rho_\alpha(G)<\rho_\alpha (G')$. By Lemma~\ref{spec2}, $\rho_\alpha(G')$ is no more than the largest root of $f_\alpha(x)=0$, and thus $\rho_\alpha(G)$ is less than the largest root of $f_\alpha(x)=0$.
 This  completes the proof.\QEDB

\vspace{3mm}
Let $\alpha=\frac{1}{2}$. It is  easy to get the following corollary for $q(G)$.
\begin{Corollary}\label{Cor4}
Let $t\geq s\geq 2$ and $G$ be a $K_{s,t}$ minor-free graph of sufficiently large order $n$.
Then
 $$q(G)\leq \frac{n+2s+2t-6+\sqrt{(n+2s-2t-2)^2+8(s-1)(t-s+1)}}{2}$$ with equality if and only if $n-s+1=pt$ and $G=K_{s-1}\nabla pK_t$.
\end{Corollary}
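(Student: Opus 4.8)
The plan is to specialize Theorem~\ref{thm3} to $\alpha=\tfrac12$ and translate the conclusion back to the signless Laplacian spectral radius via $q(G)=2\rho_{1/2}(G)$. First I would substitute $\alpha=\tfrac12$ into the polynomial $f_\alpha$ of Theorem~\ref{thm3}. Using $\alpha(n-s+1)+s-2=\tfrac12(n+s-3)$, $\alpha(s-1)+t-1=\tfrac12(s+2t-3)$, and $(1-\alpha)^2=\tfrac14$, a short computation gives
$$f_{1/2}(x)=x^2-\Big(\tfrac{n}{2}+s+t-3\Big)x+\tfrac14\big[(n+s-3)(s+2t-3)-(s-1)(n-s+1)\big].$$
By Theorem~\ref{thm3}, $\rho_{1/2}(G)$ is at most the largest root $\rho$ of $f_{1/2}(x)=0$. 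Since $4f_{1/2}(y/2)$ equals the monic quadratic in $y$ obtained by clearing denominators, the number $q(G)=2\rho_{1/2}(G)$ is at most the largest root of
$$y^2-(n+2s+2t-6)\,y+(n+s-3)(s+2t-3)-(s-1)(n-s+1)=0.$$

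Next I would apply the quadratic formula, so that the bound reads
$$q(G)\le\frac{(n+2s+2t-6)+\sqrt{(n+2s+2t-6)^2-4(n+s-3)(s+2t-3)+4(s-1)(n-s+1)}}{2}.$$
It then remains only to verify the algebraic identity
$$(n+2s+2t-6)^2-4(n+s-3)(s+2t-3)+4(s-1)(n-s+1)=(n+2s-2t-2)^2+8(s-1)(t-s+1),$$
which follows by expanding both sides: each equals $n^2+4ns-4nt-4n-4s^2+4t^2+8s-4$. Substituting this into the displayed inequality produces exactly the expression in the statement of the corollary.

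Finally, for the characterization of equality I would simply invoke the ``if and only if'' clause of Theorem~\ref{thm3} at $\alpha=\tfrac12$: equality in the $\rho_{1/2}$-bound holds precisely when $n-s+1=pt$ and $G=K_{s-1}\nabla pK_t$, and because $q=2\rho_{1/2}$ is an increasing function of $\rho_{1/2}$, this is exactly the equality case for $q(G)$. There is no genuine obstacle here; the only point requiring care is the bookkeeping in the discriminant expansion, so the corollary is immediate once Theorem~\ref{thm3} is available.
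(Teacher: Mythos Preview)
Your proposal is correct and follows exactly the route the paper intends: the paper itself simply writes ``Let $\alpha=\tfrac12$'' before stating the corollary, so the entire proof is the specialization of Theorem~\ref{thm3} to $\alpha=\tfrac12$ together with $q(G)=2\rho_{1/2}(G)$. You have merely supplied the algebraic details (the substitution $y=2x$ and the discriminant identity) that the paper leaves to the reader, and those details check out.
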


\section{Graphs without star forests}%Proof of Theorems~\ref{thm2}
In this section, we present the proof of Theorem~\ref{thm1} and some corollaries.  \begin{Lemma}\label{e1}
Let $F=\cup_{i=1}^k S_{d_i}$ be a star forest with $k\geq2$ and $d_1\geq\cdots \geq d_k\geq1$.
If $G$ is an $F$-free graph of order $n\geq  \sum_{i=1}^k d_i+k$, then
$$e(G)\leq\frac{1}{2}\bigg(\sum_{i=1}^k d_i+2k-3\bigg)n-\frac{1}{2}(k-1)\bigg(\sum_{i=1}^k d_i+k-1\bigg).$$
\end{Lemma}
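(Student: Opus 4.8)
The plan is to argue by induction on $k$, the number of stars. Throughout write $D=\sum_{i=1}^k d_i$; note that $D\ge k$ since every $d_i\ge 1$, and recall that a graph contains $S_m$ (the star with $m$ edges) as a subgraph exactly when its maximum degree is at least $m$. The whole proof turns on a case split according to the size of $\Delta(G)$ relative to $D+k-1$.

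Suppose first that $\Delta(G)\le D+k-2$. Then $2e(G)=\sum_{v\in V(G)}d_G(v)\le (D+k-2)n$, and since $n\ge D+k$ one has $(D+k-2)n\le (D+2k-3)n-(k-1)(D+k-1)$, because the difference of the two sides equals $(k-1)\bigl(n-(D+k-1)\bigr)\ge 0$. This already delivers the bound, and in particular it settles the base case $k=1$: an $S_{d_1}$-free graph has $\Delta\le d_1-1=D+k-2$ and so automatically lies in this case.

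Now suppose $\Delta(G)\ge D+k-1$, and fix $v\in V(G)$ with $d_G(v)=\Delta(G)$; then necessarily $k\ge 2$. The key step is the claim that $G-v$ contains no copy of the smaller star forest $F'=\bigcup_{i=2}^k S_{d_i}$. Indeed, a copy of $F'$ in $G-v$ would span $\sum_{i=2}^k(d_i+1)=(D-d_1)+(k-1)$ vertices, none of them $v$, so $v$ would still have at least $d_G(v)-\bigl((D-d_1)+(k-1)\bigr)\ge d_1$ neighbours outside that copy; choosing $d_1$ of them and adding $v$ as centre produces an $S_{d_1}$ vertex-disjoint from the copy of $F'$, hence a copy of $F$ in $G$, a contradiction. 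Since $F'$ is a star forest with $k-1$ stars of sizes $d_2\ge\cdots\ge d_k\ge 1$ summing to $D-d_1$, and $|V(G-v)|=n-1\ge (D-d_1)+(k-1)$, the induction hypothesis applies to $G-v$. Using $e(G)=e(G-v)+d_G(v)\le e(G-v)+(n-1)$ and substituting the inductive bound for $e(G-v)$ gives
\[ e(G)\ \le\ \tfrac12\bigl(D-d_1+2k-3\bigr)(n-1)-\tfrac12(k-2)\bigl(D-d_1+k-2\bigr), \]
and a routine computation rewrites the right-hand side as $\tfrac12(D+2k-3)n-\tfrac12(k-1)(D+k-1)-\tfrac12 d_1(n-k+1)$, which is at most the asserted bound because $d_1\ge 1$ and $n\ge D+k>k-1$.

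The arithmetic in the two inequality verifications is routine; the one point demanding care is the combinatorial claim in the large-degree case, that deleting a single vertex of maximum degree leaves a graph with no copy of $\bigcup_{i=2}^k S_{d_i}$. This is where the greedy ``extend a copy of $F'$ by an $S_{d_1}$ centred at the deleted vertex'' count is invoked, and it is essential that the threshold $D+k-1$ was chosen precisely so that the leftover degree of $v$ is at least $d_1$. I also need to check that the order hypothesis survives the reduction, i.e.\ $n-1\ge (D-d_1)+(k-1)$, which holds with room to spare, so the induction is legitimate. Conveniently, both cases carry genuine slack, so no boundary subtleties arise; this is consistent with the fact that the estimate is only a crude intermediate bound, not the exact Tur\'an number.
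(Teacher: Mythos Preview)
Your proof is correct, but it takes a different route from the paper's. The paper argues in one shot: set $C=\{v\in V(G):d(v)\ge D+k-1\}$, observe that $|C|\le k-1$ (otherwise a greedy embedding of $F$ with centres in $C$ succeeds, since each centre has enough neighbours to avoid the other centres and the previously embedded stars), and then bound $2e(G)=\sum_{v\in C}d(v)+\sum_{v\notin C}d(v)\le (n-1)|C|+(D+k-2)(n-|C|)$, which is maximised at $|C|=k-1$ and gives exactly the stated bound. Your induction on $k$ peels off one high-degree vertex at a time and invokes the same underlying combinatorial fact (a vertex of degree $\ge D+k-1$ can serve as the centre of $S_{d_1}$ disjoint from any copy of $\bigcup_{i\ge 2}S_{d_i}$) to show $G-v$ is $F'$-free. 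The paper's approach is shorter and avoids the need to extend the statement to $k=1$ for the base case; on the other hand, your argument produces the explicit slack term $\tfrac12 d_1(n-k+1)$, showing the inequality is strict whenever a high-degree vertex exists, which the paper's computation does not immediately reveal.
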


\begin{Proof}
Let $C=\{v\in V(G):d(v)\geq \sum_{i=1}^k d_i+k-1\}$.  Since $G$ is $F$-free, $|C|\leq k-1$, otherwise we can embed an $F$ in $G$ by the definition of $C$.
Hence
\begin{eqnarray*}
% \nonumber to remove numbering (before each equation)
  2e(G) &=& \sum_{v\in C}d(v)+ \sum_{v\in V(G)\backslash C}d(v)\\
   &\leq&(n-1)|C|+(n-|C|)\bigg(\sum_{i=1}^k d_i+k-2\bigg) \\
   &=& \bigg(n- \sum_{i=1}^k d_i-k+1\bigg)|C|+\bigg(\sum_{i=1}^k d_i+k-2\bigg)n\\
   &\leq&(k-1)\bigg(n- \sum_{i=1}^k d_i-k+1\bigg)+\bigg(\sum_{i=1}^k d_i+k-2\bigg)n\\
   &=&\bigg(\sum_{i=1}^k d_i+2k-3\bigg)n-(k-1)\bigg(\sum_{i=1}^k d_i+k-1\bigg)\\
\end{eqnarray*}
This completes the proof.
\end{Proof}

Next we prove the following result for star-forest-free connected graphs, which plays an important role in the proof of Theorem~\ref{thm1}.
\begin{Theorem}\label{c-thm2}
Let $F=\cup_{i=1}^k S_{d_i}$ be a star forest with $k\geq2$ and $d_1\geq\cdots \geq d_k\geq2$.
  If $G$ is an $F$-free connected graph of   order $n\geq \frac{4(\sum_{i=1}^k d_i+k-2)(\sum_{i=1}^k d_i+3k-5)}{\alpha^2}$ for any $0<\alpha<1$,
 then $\rho_\alpha(G)$ is no more than the largest root of $f_\alpha(x)=0$ and
equality holds if and only if $G=K_{k-1}\nabla H$ and $H$ is a $(d_k-1)$-regular graph of order $n-k+1$, where
\begin{eqnarray*}
 % \nonumber to remove numbering (before each equation)
   &&f_\alpha(x) =x^2-\Big(\alpha n+k+d_k-3\Big)x+\\
    && \Big(\alpha(n-k+1)+k-2\Big)\Big(\alpha(k-1)+d_k-1\Big)-(1-\alpha)^2(k-1)(n-k+1).
 \end{eqnarray*}
\end{Theorem}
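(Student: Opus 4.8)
The plan is to follow the same spectral-extremal strategy used in the proofs of Theorems~\ref{thm2} and~\ref{thm3}, but now exploiting the structural consequence of being $F$-free rather than minor-free. Let $G$ be an $F$-free connected graph of the stated order with maximum $\alpha$-index, let $\rho_\alpha=\rho_\alpha(G)$ and let $\mathbf x=(x_v)$ be the positive Perron eigenvector normalized so that $x_w=\max_v x_v=1$ for some $w\in V(G)$. As usual, partition $V(G)=L\cup S$ with $L=\{v:x_v>\epsilon\}$ and $S=\{v:x_v\le\epsilon\}$, where $\epsilon$ will be fixed small at the end. The key quantitative input is that $K_{k-1}\nabla\overline K_{n-k+1}$ is $F$-free (it contains no $k$ disjoint stars since deleting the $k-1$ dominating vertices leaves an independent set, so at most $k-1$ star-centers are available), which by Lemma~\ref{Cor-Spec3} gives $\rho_\alpha\ge \alpha(n-1)+(1-\alpha)(k-2)$ and, for $n$ large, $\rho_\alpha\ge \alpha n+\frac{2(k-1)-1-(2k-1)\alpha}{2\alpha}$. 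Combined with the edge bound $e(G)\le\frac12(\sum d_i+2k-3)n$ from Lemma~\ref{e1}, one runs the ``degree of a high-weight vertex is nearly $n$'' argument exactly as in Claims~2--4 of Theorem~\ref{thm2}: every $u\in L$ has $d(u)>\rho_\alpha\epsilon/(1-\alpha+\alpha\epsilon)$, hence $|L|=O(1/\epsilon)$; then $\sum_v x_v$ is bounded by a constant times $n/\rho_\alpha = O(1)$; then each $u\in L$ has $d(u)\ge (1-C(1-x_u+\epsilon))n$; and finally an iteration (the analogue of Claim~4) produces a set $K$ of $k-1$ vertices, each with eigenvector entry at least $1-\delta$ and common neighborhood $T$ of size at least $(1-\delta)n$, with $R:=V(G)\setminus(K\cup T)$ of size at most $\delta n$.

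Once $K$, $T$, $R$ are in hand, the next step is to show $R=\emptyset$, mirroring Claims~6--7. Since $G$ is $F$-free and $K$ is ``almost dominating'', every vertex of $R\cup T$ has bounded degree outside $K$: indeed if $v\in V(G)\setminus K$ had $\sum_{i=1}^k d_i+k-1$ neighbors, then—using that the $k-1$ vertices of $K$ together with $v$ can host $k$ star-centers—one would embed $F$, so $d(v)\le \sum d_i+k-2+|R|\le \sum d_i+k-2+\delta n$. This bounds $x_v$ for $v\notin K$ by a small constant via the eigenequation $(\rho_\alpha-\alpha d(v))x_v=(1-\alpha)\sum_{uv\in E}x_u\le(1-\alpha)(k-1+\sum_{u\in R}x_u)$ together with $\sum_{u\in R}x_u\le C\delta n/(\alpha(n-1))$, which is $o(1)$. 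Then if $R\ne\emptyset$ pick $v\in R$ with few neighbors in $R$ (possible since $e(R)\le e(G)$ is $O(n)$ while $|R|\le\delta n$, so the minimum degree inside $R$ is bounded), delete its incident edges and join it to all of $K$; the resulting graph is still $F$-free because adding one vertex adjacent to $K$ cannot create $k$ disjoint stars (the new vertex plus $K$ still have independent common exterior up to the old structure—this needs the same ``$K$ plus one vertex'' counting), and the Rayleigh quotient strictly increases since $x_u\ge 1-\delta$ for $u\in K$ while the entries of the deleted neighbors are $o(1)$. This contradicts maximality, so $R=\emptyset$ and $V(G)=K\cup T$ with $|K|=k-1$, $|T|=n-k+1$.

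With $R=\emptyset$ we have $G=G[K]\nabla H$ where $H=G[T]$. Being $F$-free forces $\Delta(H)\le d_k-1$: a vertex of $H$ with $d_k$ neighbors in $H$ would, together with $k-1$ disjoint $S_{d_i}$-stars centered at the vertices of $K$ (which is possible since $|T|$ is enormous compared to $\sum d_i$), complete a copy of $F$. If $G[K]$ is not a clique, add the missing edges; this keeps $G$ $F$-free (no star forest is created by adding edges inside a $(k-1)$-set whose deletion already controls the structure) and strictly increases $\rho_\alpha$, so at the extremum $G[K]=K_{k-1}$. Now Lemma~\ref{spec2} applies with $d=d_k$: since $n$ is large enough to meet its hypothesis $n\ge\max\{k-1,2k-2+\frac{d_k-k+1}{\alpha}\}$, we get $\rho_\alpha(G)\le$ the largest root of $f_\alpha(x)=0$, with equality iff $H$ is $(d_k-1)$-regular, i.e. $G=K_{k-1}\nabla H$ with $H$ a $(d_k-1)$-regular graph of order $n-k+1$; conversely such a graph is $F$-free because after removing $K$ only $k-1$ star-centers remain. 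The main obstacle, as in the minor-free arguments, is the iteration in the analogue of Claim~4—carefully tracking how the error terms $\eta,\epsilon$ compound so that after $k-1$ rounds one still has $1-\delta$ bounds with $\delta$ controllable by the initial choice of $\epsilon$—together with verifying at each surgical step (the $R$-emptying and the clique-completion inside $K$) that $F$-freeness is genuinely preserved, which relies on the crude but robust fact that any copy of $F$ needs $k$ vertices of degree at least $d_k\ge 2$ lying in ``spread-out'' positions, impossible once a $(k-1)$-set dominates almost everything.
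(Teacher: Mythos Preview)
Your plan diverges substantially from the paper's, and the divergence hides a real gap. You propose to mimic the minor-free proofs of Theorems~\ref{thm2}--\ref{thm3} wholesale: bound $|L|=O(1/\epsilon)$, run a Claim~4-style iteration to locate $k-1$ near-dominating vertices $K$, then empty the remainder $R$ by surgery. The paper does none of this. It fixes the single explicit value $\epsilon=\frac{1}{4(\sum d_i+3k-5)}$ and proves in one claim that $|L|=k-1$ \emph{exactly}, via two short cases: (i) if $|L|\ge k$, then every $u\in L$ has $d(u)\ge\sum d_i+k-1$ from the eigen\-equation, so $F$ embeds greedily with all centers in $L$; (ii) if $|L|\le k-2$, then trivially $e(L,S)\le(k-2)(n-k+2)$, and the second-order identity $\sum_{uv\in E(G)}(x_u+x_v)\ge\rho_\alpha(\rho_\alpha+1-\alpha-\alpha d(w))/(1-\alpha)$ forces $(1+\epsilon)(k-2)n\gtrsim(k-\tfrac32-\epsilon(\sum d_i+2k-3))n$, contradicting the hypothesis on $n$. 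With $|L|=k-1$ in hand, extremality alone gives $G=K_{k-1}\nabla H$ (adding all missing $L$--$L$ and $L$--$S$ edges keeps $G$ $F$-free and strictly raises $\rho_\alpha$), $H$ is $S_{d_k}$-free since a high-degree $S$-vertex together with the $k-1$ high-degree $L$-vertices would host $F$, and Lemma~\ref{spec2} finishes. No iteration, no set $R$, no edge-surgery.

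The concrete gap in your route is the ``analogue of Claim~4''. In the minor proofs that step rests on the bipartite bound $e(L,S)\le(r-2+o(1))n$ from Lemma~\ref{e-minor2} (resp.\ Lemma~\ref{e-minor5}); this is what makes the averaging ratio in Claim~4 close to $1$. For star forests the only global edge bound available \emph{a priori} is Lemma~\ref{e1}, giving merely $e(L,S)\le(\sum d_i+2k-3)n$, and with that denominator the ratio $(k-\tfrac32-t)/(\sum d_i+2k-3-t)$ is bounded away from $1$, so the iteration does not produce entries close to $1$. The tight bound $e(L,S)\le(k-1)n$ is only available once you already know $|L|\le k-1$, i.e.\ after the greedy-embedding observation of case~(i)---precisely the idea your plan omits. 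Once you insert that observation, the iteration and the $R$-emptying become superfluous and you are back to the paper's argument.
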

\begin{Proof}
Let $G$ be an $F$-free connected  graph  of  order  $n$ with the maximum $\alpha$-index.
Set for short $A_\alpha=A_\alpha(G)$ and $\rho_\alpha=\rho_\alpha(G)$. Let $\mathbf x_\alpha=(x_v)_{v\in V(G)}$ be a positive eigenvector to  $\rho_\alpha $ such that $w\in V(G)$ and $$x_w=\max\{x_u:u\in V\}=1.$$

 Since $K_{k-1}\nabla \overline{K}_{n-k+1}$ is $F$-free, it follows from Lemma~\ref{Cor-Spec3} that
$$\rho_\alpha\geq \rho_\alpha(K_{k-1}\nabla \overline{K}_{n-k+1})\geq \alpha n+\frac{2k-3-(2k-1)\alpha}{2\alpha}. $$
Let $L=\{u\in V(G): x_u> \epsilon\} $ and $S=\{u\in V(G): x_u\leq \epsilon\} $, where %$$\frac{(1-\alpha)\big(\sum_{i=1}^k d_i+k-2\big)}{\alpha\big(n-\sum_{i=1}^kd_i-k+2\big)}\leq\epsilon < \frac{1}{\sum_{i=1}^k 4d_i+10k-16}.$$
$$\epsilon = \frac{1}{4(\sum_{i=1}^k d_i+3k-5)}.$$

{\bf Claim.} $|L|=k-1$.

First suppose that $|L|\geq k$.
By eigenequations of $A_\alpha$ on any vertex $u\in L$, we have
$$(\rho_\alpha-\alpha d(u))\epsilon<(\rho_\alpha-\alpha d(u)) x_u=(1-\alpha)\sum_{uv\in E(G)} x_v\leq (1-\alpha)d(u),$$
which implies that $$d(u)> \frac{\rho_\alpha\epsilon}{1-\alpha+\alpha\epsilon}\geq\bigg(\alpha n+\frac{2k-3-(2k-1)\alpha}{2\alpha}\bigg)\frac{\epsilon}{1-\alpha+\alpha\epsilon}\geq \sum_{i=1}^kd_i+k-2,$$
where the last inequality holds as $\epsilon\geq \frac{2\alpha(1-\alpha)(\sum_{i=1}^k d_i+k-2)}{2\alpha^2(n-\sum_{i=1}^k d_i-k+2)-(2k-1)\alpha+2k-3}$.
Thus $$d(u)\geq\sum_{i=1}^kd_i+k-1.$$
Then we can embed an $F$ in $G$ with all centers in $L$, a contradiction.

Next suppose that $|L|\leq k-2$. Then $$e(L)\leq \binom{|L|}{2}\leq \frac{1}{2}(k-2)(k-3) $$ and $$e(L,S)\leq (k-2)(n-k+2).$$ In addition,
by Lemma~\ref{e1}, $$e(S)\leq e(G)\leq\frac{1}{2}\bigg(\sum_{i=1}^k d_i+2k-3\bigg)n.$$
By eigenequations of $A_\alpha$ on  $w$, we have
$$\rho_\alpha-\alpha d(w)=(\rho_\alpha-\alpha d(w)) x_w=(1-\alpha)\sum_{vw\in E(G)} x_v.$$
Multiplying both sides of the above equality by $\rho_\alpha$, we have
\begin{eqnarray*}
% \nonumber to remove numbering (before each equation)
  &&\rho_\alpha(\rho_\alpha-\alpha d(w))\\
  &=& (1-\alpha)\sum_{vw\in E(G)} \rho_\alpha x_v \\
   &=& (1-\alpha)\sum_{vw\in E(G)} \bigg(\alpha d(v)x_v+(1-\alpha)\sum_{uv\in E(G)} x_u\bigg)\\
 &=& (1-\alpha)\sum_{vw\in E(G)} \alpha d(v)x_v+(1-\alpha)^2\sum_{vw\in E(G)}\sum_{uv\in E(G)} x_u\\
 &\leq&(1-\alpha)\bigg(\sum_{v\in V(G)} \alpha d(v)x_v-\alpha d(w)\bigg)+(1-\alpha)^2\sum_{uv\in E(G)}( x_u+x_v)-(1-\alpha)^2\sum_{vw\in E(G)}x_v\\
&=&\alpha(1-\alpha)\sum_{uv\in E(G)}( x_u+x_v)-\alpha(1-\alpha)d(w)+(1-\alpha)^2\sum_{uv\in E(G)}( x_u+x_v)-\\
&&(1-\alpha)(\rho_\alpha-\alpha d(w))\\
&=&(1-\alpha)\sum_{uv\in E(G)}( x_u+x_v)-(1-\alpha)\rho_\alpha,
\end{eqnarray*}
which implies that $$\sum_{uv\in E(G)}( x_u+x_v)\geq \frac{\rho_\alpha(\rho_\alpha+1-\alpha-\alpha d(w))}{1-\alpha}.$$
On the other hand,
\begin{eqnarray*}
% \nonumber to remove numbering (before each equation)
  \sum\limits_{uv\in E(G)}(x_u+x_v)&=&\sum\limits_{uv\in E(L,S)}(x_u+x_v)+\sum\limits_{uv\in E(S)}(x_u+x_v) +\sum\limits_{uv\in E(L)}(x_u+x_v)\\
   &\leq& \sum\limits_{uv\in E(L,S)}(x_u+x_v)+ 2\epsilon e(S)+2e(L)\\
   &\leq& \sum\limits_{uv\in E(L,S)}(x_u+x_v)+\epsilon\bigg(\sum_{i=1}^k d_i+2k-3\bigg)n+(k-2)(k-3).
\end{eqnarray*}
Hence
\begin{eqnarray*}
% \nonumber to remove numbering (before each equation)
  &&\sum\limits_{uv\in E(L,S)}(x_u+x_v)\\
  &\geq& \frac{\rho_\alpha(\rho_\alpha+1-\alpha-\alpha d(w))}{1-\alpha}-\epsilon\bigg(\sum_{i=1}^k d_i+2k-3\bigg)n-(k-2)(k-3)\\\
 &\geq&\bigg(\frac{\alpha n}{1-\alpha}+\frac{2k-3-(2k-1)\alpha}{2\alpha(1-\alpha)}\bigg)\bigg(\alpha n+\frac{2k-3-(2k-1)\alpha}{2\alpha}+1-\alpha-\alpha (n-1)\bigg)-\\
 &&\epsilon\bigg(\sum_{i=1}^k d_i+2k-3\bigg)n-(k-2)(k-3)\\
%  &\geq&\bigg(\frac{\alpha n}{1-\alpha}+\frac{2k-3-(2k-1)\alpha}{2\alpha(1-\alpha)}\bigg)\frac{(2k-3)(1-\alpha)}{2\alpha}-2\epsilon\bigg(\sum_{i=1}^k d_i+2k-3\bigg)n-\\
%  &&(k-2)(k-3)\\
  &=&\bigg(k-\frac{3}{2}-\epsilon\bigg(\sum_{i=1}^k d_i+2k-3\bigg)\bigg)n+\frac{(2k-3)^2-(2k-3)(2k-1)\alpha}{4\alpha^2}-(k-2)(k-3),
\end{eqnarray*}
where the second inequality holds as $d(w)\leq n-1$.
On the other hand, by the definition of $L$ and $S$, we have
$$\sum\limits_{uv\in E(L,S)}(x_u+x_v)\leq(1+\epsilon)e(L,S)\leq (1+\epsilon)(k-2)(n-k+2).$$
Thus
\begin{eqnarray*}
% \nonumber to remove numbering (before each equation)
&&(1+\epsilon)(k-2)(n-k+2)\\
 &\geq& \bigg(k-\frac{3}{2}-\epsilon\bigg(\sum_{i=1}^k d_i+2k-3\bigg)\bigg)n+\frac{(2k-3)^2-(2k-3)(2k-1)\alpha}{4\alpha^2}-\\
 &&(k-2)(k-3),
\end{eqnarray*}
which implies that
$$\bigg(-\frac{1}{2}+\epsilon\bigg(\sum_{i=1}^k d_i+3k-5\bigg)\bigg)n\geq\frac{(2k-3)^2-(2k-3)(2k-1)\alpha}{4\alpha^2}+(k-2)(1+\epsilon(k-2)).$$
Since $\epsilon = \frac{1}{4(\sum_{i=1}^k d_i+3k-5)}$, we have
\begin{eqnarray*}
% \nonumber to remove numbering (before each equation)
  n &\leq &- \frac{(2k-3)^2-(2k-3)(2k-1)\alpha}{\alpha^2}- 4(k-2)\bigg(1+\frac{k-2}{4(\sum_{i=1}^k d_i+3k-5)}\bigg)\\
  &<& \frac{(2k-3)(2k-1)}{\alpha}\\
  &<&\frac{4(\sum_{i=1}^k d_i+k-2)(\sum_{i=1}^k d_i+3k-5)}{\alpha^2},
\end{eqnarray*}
a contradiction.
This proves the Claim.

By Claim, $|L|=k-1$ and thus $|S|=n-k+1$.  Then  the subgraph $H$ induced by $S$ in $G$ is $S_{d_k}$-free. Otherwise, we can embed an $F$ in $G$ with $k-1$ centers in $L$ and a center in $S$ as $d(u)\geq\sum_{i=1}^kd_i+k-1$ for any $u\in L$, a contradiction.  Now $\Delta(H)\leq d_k-1$. Note that the resulting graph obtained  from $G$ by adding all edges in $L$ and all edges with one end in $L$ and the other in $S$ is also $F$-free and its spectral radius  increase strictly. By the extremality of $G$, we have $G=K_{k-1}\nabla H$. By Lemma~\ref{spec2} and  the extremality of $G$, $\rho_\alpha$ is no more than largest root of $f_{\alpha}(x)=0$,
and  $\rho_\alpha$ is equal to the largest root of $f_{\alpha}(x)=0$ if and only if $H$ is a $(d_k-1)$-regular graph of order $n-k+1$,  where
\begin{eqnarray*}
 % \nonumber to remove numbering (before each equation)
   &&f_\alpha(x) =x^2-\Big(\alpha n+k+d_k-3\Big)x+\\
    && \Big(\alpha(n-k+1)+k-2\Big)\Big(\alpha(k-1)+d_k-1\Big)-(1-\alpha)^2(k-1)(n-k+1).
 \end{eqnarray*}
%$\rho_\alpha(G)$ is no more than the largest root of $f_\alpha(x)=0$ and
%equality holds if and only if $H$ is a $(d_k-1)$-regular graph, where
%\begin{eqnarray*}
% % \nonumber to remove numbering (before each equation)
%   &&f_\alpha(x) =x^2-\Big(\alpha(n+d_k-1)+k+d-3\Big)x+\\
%    && \Big(\alpha(n-1)+(1-\alpha)(k-2)\Big)\Big(\alpha(k+d_k-2)+d_k-1\Big)-(1-\alpha)^2(k-1)(n-k+1).
% \end{eqnarray*}
This completes the proof.
\end{Proof}

\vspace{3mm}
\noindent{\bf Proof of Theorem~\ref{thm1}.}
Let $G$ be an $F$-free  graph  of   order  $n$ with the maximum $\alpha$-index.

 If $G$ is connected, then the result follows directly from Theorem~\ref{c-thm2}.
Next we suppose that $G$ is not connected. Since $K_{n-1}\nabla \overline{K}_{n-k+1}$ is $F$-free,
$$\rho_\alpha(G)\geq\rho_\alpha(K_{n-1}\nabla \overline{K}_{n-k+1})\geq \alpha n+\frac{2k-3-(2k-1)\alpha}{2\alpha}.$$ Let $G_1$ be a component of $G$  such that $\rho_\alpha(G_1)=\rho_\alpha(G)$. Set $n_1=|V(G_1)|$.
Then
\begin{eqnarray*}
% \nonumber to remove numbering (before each equation)
  n_1-1 &\geq&  \rho_\alpha(G_1)=\rho_\alpha(G)\geq\alpha n+\frac{2k-3-(2k-1)\alpha}{2\alpha}\\
  &\geq& \frac{4(\sum_{i=1}^k d_i+k-2)(\sum_{i=1}^k d_i+3k-5)}{\alpha^2}+\frac{2k-3-(2k-1)\alpha}{2\alpha},
\end{eqnarray*}
which implies that
\begin{eqnarray*}
% \nonumber to remove numbering (before each equation)
  n_1 &\geq &\frac{4(\sum_{i=1}^k d_i+k-2)(\sum_{i=1}^k d_i+3k-5)}{\alpha^2}+\frac{(2k-3)(1-\alpha)}{2\alpha} \\
   &>& \frac{4(\sum_{i=1}^k d_i+k-2)(\sum_{i=1}^k d_i+3k-5)}{\alpha^2}
\end{eqnarray*}
By Theorem~\ref{c-thm2} again, $\rho_\alpha(G_1)$ is no more than the largest root of
$$x^2-\Big(\alpha n_1+k+d_k-3\Big)x+
    \Big(\alpha(n_1-k+1)+k-2\Big)\Big(\alpha(k-1)+d_k-1\Big)-(1-\alpha)^2(k-1)(n_1-k+1)=0.$$
    Hence $\rho_\alpha(G_1)$ is less than the largest root of
$$x^2-\Big(\alpha n+k+d_k-3\Big)x+
    \Big(\alpha(n-k+1)+k-2\Big)\Big(\alpha(k-1)+d_k-1\Big)-(1-\alpha)^2(k-1)(n-k+1)=0.$$
 This completes the proof.
\QEDB
\vspace{3mm}

Let $F_{n,k}=K_{k-1}\nabla ( p K_2\cup qK_1)$, where $n-(k-1)=2p+q$ and $0\leq q<2$.

\begin{Corollary}\label{Cor}
Let $F=\cup_{i=1}^k S_{d_i}$ be a star forest with $k\geq2$ and $d_1\geq\cdots \geq d_k=2$.
If $G$ is an $F$-free graph of  order $n\geq \frac{4(\sum_{i=1}^k d_i+k-2)(\sum_{i=1}^k d_i+3k-5)}{\alpha^3}$,
 then $$\rho_\alpha(G)\leq \rho_\alpha(F_{n,k})$$ with equality if and only if $G=F_{n,k}$.
\end{Corollary}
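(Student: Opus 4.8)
The plan is to specialize Theorem~\ref{thm1} (equivalently Theorem~\ref{c-thm2}) to the case $d_k=2$ and then pin down, among the structures that theorem allows, the one that actually maximizes the $\alpha$-index. With $d_k=2$ we have $d_k-1=1$, so the extremal candidate from Theorem~\ref{thm1} is $K_{k-1}\nabla H$ with $|V(H)|=n-k+1$ and $\Delta(H)\le 1$; such an $H$ is precisely a disjoint union of edges and isolated vertices, $H=p'K_2\cup q'K_1$ with $2p'+q'=n-k+1$. If $n-k+1$ is even, the unique $1$-regular such $H$ is the perfect matching $pK_2$ with $p=(n-k+1)/2$, so $F_{n,k}=K_{k-1}\nabla pK_2$ and the corollary is immediate from the equality case of Theorem~\ref{thm1}. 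The real content is the odd case, in which no $1$-regular $H$ exists and Theorem~\ref{thm1} yields only a strict inequality; the argument below handles both parities uniformly.

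First I would treat connected graphs. Let $G$ be an $F$-free connected graph of order $n$ with maximum $\alpha$-index; since $n$ also exceeds the threshold of Theorem~\ref{c-thm2}, its proof gives $G=K_{k-1}\nabla H$ with $H=G[S]$ and $\Delta(H)\le d_k-1=1$, hence $H=p'K_2\cup q'K_1$ with $p'\le\lfloor (n-k+1)/2\rfloor=p$. Taking any $p'$ of the $p$ edges of the matching part $pK_2\cup qK_1$ of $F_{n,k}$ to be the edges of $H$ exhibits $H$ as a spanning subgraph of $pK_2\cup qK_1$, so $G=K_{k-1}\nabla H$ is a spanning subgraph of $F_{n,k}$. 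Since $F_{n,k}$ is connected (it contains the connected graph $K_{k-1}\nabla\overline{K}_{n-k+1}$), it follows that $\rho_\alpha(G)\le\rho_\alpha(F_{n,k})$, with equality iff $p'=p$, i.e. iff $G=F_{n,k}$. Moreover $F_{n,k}$ is itself $F$-free: in any embedding of $F=\cup_{i=1}^k S_{d_i}$ into a graph of the form $K_{k-1}\nabla H$ with $\Delta(H)\le d_k-1$, at least one star has its center in $V(H)$ (since $|V(K_{k-1})|=k-1<k$), and each star with center in $V(H)$ must place at least $d_i-(d_k-1)\ge 1$ of its leaves in $K_{k-1}$; counting the star-centers lying in $K_{k-1}$ together with these leaves forces $|V(K_{k-1})|\ge k$, a contradiction. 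Hence $F_{n,k}$ is a valid competitor and the extremal connected graph of order $n$ is exactly $F_{n,k}$.

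Next I would rule out a disconnected extremal graph. Suppose $G$ is disconnected, $F$-free, of order $n$, with maximum $\alpha$-index, and let $G_1$ be a component with $\rho_\alpha(G_1)=\rho_\alpha(G)$ and order $n_1$. Since $F_{n,k}\supseteq K_{k-1}\nabla\overline{K}_{n-k+1}$, Lemma~\ref{Cor-Spec3} gives $\rho_\alpha(G)\ge\rho_\alpha(F_{n,k})\ge\alpha n+\frac{2k-3-(2k-1)\alpha}{2\alpha}$, so $n_1-1\ge\rho_\alpha(G_1)\ge\alpha n+\frac{2k-3-(2k-1)\alpha}{2\alpha}$; this is the only place the extra factor $1/\alpha$ in the hypothesis $n\ge\frac{4(\sum_{i=1}^k d_i+k-2)(\sum_{i=1}^k d_i+3k-5)}{\alpha^3}$ is used, namely to guarantee $n_1\ge\frac{4(\sum_{i=1}^k d_i+k-2)(\sum_{i=1}^k d_i+3k-5)}{\alpha^2}$ so that Theorem~\ref{c-thm2} applies to $G_1$. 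The connected case (with $n$ replaced by $n_1$) then gives $\rho_\alpha(G_1)\le\rho_\alpha(F_{n_1,k})$. Because $n_1\le n-1$, the matching part of $F_{n_1,k}$ has at most $\lfloor (n_1-k+1)/2\rfloor\le\lfloor (n-k+1)/2\rfloor$ edges, so $F_{n_1,k}$ embeds as a proper subgraph of $F_{n,k}$, whence $\rho_\alpha(F_{n_1,k})<\rho_\alpha(F_{n,k})$. Thus $\rho_\alpha(G)=\rho_\alpha(G_1)<\rho_\alpha(F_{n,k})$, contradicting maximality. Therefore the extremal graph is connected, and the previous paragraph completes the proof.

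The main obstacle is the odd case $n-k+1\equiv 1\pmod 2$, where Theorem~\ref{thm1} cannot be quoted verbatim because its equality condition ($H$ being $1$-regular) is unattainable. The two points requiring care are (i) showing that $F_{n,k}$ is the maximizer of $\rho_\alpha$ among all $K_{k-1}\nabla H$ with $\Delta(H)\le 1$ and $|V(H)|=n-k+1$, which I handle via the spanning-subgraph observation that every such $H$ sits inside the matching part of $F_{n,k}$; and (ii) excluding disconnected competitors, which I reduce to the proper-subgraph embedding $F_{n_1,k}\subseteq F_{n,k}$ for $n_1\le n-1$, after first using Lemma~\ref{Cor-Spec3} to force the surviving component to be large enough for Theorem~\ref{c-thm2} to apply — this is exactly what the strengthened order hypothesis buys.
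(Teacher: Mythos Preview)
Your proof is correct and follows essentially the same approach as the paper: use the proof of Theorem~\ref{c-thm2} to force $G=K_{k-1}\nabla H$ with $\Delta(H)\le 1$ in the connected case, then treat the disconnected case by showing a maximal component $G_1$ is large enough for Theorem~\ref{c-thm2} and comparing with $F_{n_1,k}\subsetneq F_{n,k}$. You are in fact more careful than the paper in two spots the paper glosses over: you explicitly verify that $F_{n,k}$ is $F$-free, and you spell out the spanning-subgraph argument showing the extremal $H$ must carry the maximum number of matching edges (the paper simply writes ``Hence $G=F_{n,k}$'' at that point).
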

\begin{proof} Let $G$ be a graph having the maximum $\alpha$-index among all $F$-free graphs of order $n$. It suffices to show that $G=F_{n,k}$.
If $G$ is connected, then by the proof of Theorem~\ref{c-thm2},  we have $G=K_{k-1}\nabla H,$
where $H$ is a graph of order $n-k+1$ with  $\Delta(H)\leq 1$. So $H$ is the union of some edges and isolated vertices. Hence $G=F_{n,k}$.
If $G$ is not connected, then by the similar proof of Theorem~\ref{thm1},  there is a component $G_1$ of $G$ such that $|V(G_1)|\geq  \frac{4(\sum_{i=1}^k d_i+k-2)(\sum_{i=1}^k d_i+3k-5)}{\alpha^2}$ and $\rho(G)=\rho(G_1)$.
By above case,
$$\rho_\alpha(G)=\rho_\alpha(G_1) =\rho_\alpha(F_{n_1,k})<\rho_\alpha(F_{n,k}).$$
 Hence the result follows.
\end{proof}

\vspace{3mm}

Let $\alpha=\frac{1}{2}$. By Theorem~\ref{thm1} and Corollary~\ref{Cor}, we have the following corollary.

\begin{Corollary}\label{Cor2}
Let $F=\cup_{i=1}^k S_{d_i}$ be a star forest with $k\geq2$ and $d_1\geq\cdots \geq d_k\geq2$ and $G$ be an $F$-free graph of order $n\geq 32(\sum_{i=1}^k d_i+k-2)(\sum_{i=1}^k d_i+3k-5)$.\\
(i) If $d_k=2$, then $$q(G)\leq q(F_{n,k})$$ with equality if and only if $G=F_{n,k}$.\\
(ii) If $d_k\geq 3$, then
 $$q(G)\leq \frac{n+2k+2d_k-6+\sqrt{(n+2k-2d_k-2)^2+8(k-1)(d_k-k+1)}}{2}$$ with equality if and only if $G=K_{k-1}\nabla H$, where $H$ is a $(d_k-1)$-regular graph of order $n-k+1$.
\end{Corollary}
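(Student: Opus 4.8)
The plan is to deduce both parts by specializing the two preceding results to $\alpha=\tfrac12$ and invoking the relation $q(G)=2\rho_{1/2}(G)$ recorded in the introduction (which holds because $2A_{1/2}(G)=Q(G)$). First I would check that the hypothesis is in force: setting $\alpha=\tfrac12$ in the order bound $\frac{4(\sum_{i=1}^k d_i+k-2)(\sum_{i=1}^k d_i+3k-5)}{\alpha^3}$ appearing in Theorem~\ref{thm1} and Corollary~\ref{Cor} produces exactly $32(\sum_{i=1}^k d_i+k-2)(\sum_{i=1}^k d_i+3k-5)$, which is precisely the lower bound on $n$ assumed here, so both results are applicable.

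For part (i), when $d_k=2$, Corollary~\ref{Cor} gives $\rho_{1/2}(G)\leq\rho_{1/2}(F_{n,k})$ with equality if and only if $G=F_{n,k}$. Multiplying through by $2$ and using $q=2\rho_{1/2}$ immediately yields $q(G)\leq q(F_{n,k})$ with the same equality case, which is the assertion of (i).

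For part (ii), when $d_k\geq 3$, Theorem~\ref{thm1} states that $\rho_{1/2}(G)$ is at most the largest root of $f_{1/2}(x)=0$, with equality exactly when $G=K_{k-1}\nabla H$ for some $(d_k-1)$-regular graph $H$ of order $n-k+1$. The rest is bookkeeping: substituting $x=q/2$ into $f_{1/2}(x)=0$ and clearing the denominator turns it into the monic quadratic $q^2-(n+2k+2d_k-6)q+4c=0$, where $c$ denotes the constant term of $f_{1/2}$, namely $c=\tfrac14\big[(n+k-3)(k+2d_k-3)-(k-1)(n-k+1)\big]$. Since $q(G)=2\rho_{1/2}(G)$ is bounded by twice the larger root of $f_{1/2}$, it is bounded by the larger root of this $q$-quadratic, and the quadratic formula gives $q(G)\leq\frac{(n+2k+2d_k-6)+\sqrt{(n+2k+2d_k-6)^2-16c}}{2}$; the equality characterization is inherited unchanged from Theorem~\ref{thm1}.

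The one point needing genuine verification is that the discriminant matches the stated form, i.e. that $(n+2k+2d_k-6)^2-16c=(n+2k-2d_k-2)^2+8(k-1)(d_k-k+1)$. I expect this to be the only real obstacle, and it is a finite check: the difference of the two squared linear forms factors as $(4d_k-4)(2n+4k-8)=8(d_k-1)(n+2k-4)$, so the identity reduces to $8(d_k-1)(n+2k-4)-16c=8(k-1)(d_k-k+1)$, equivalently $(d_k-1)(n+2k-4)-2c=(k-1)(d_k-k+1)$, which one confirms by expanding $2c$ from the formula above: the $n$-terms cancel and the remainder collapses to $d_k(k-1)-(k-1)^2=(k-1)(d_k-k+1)$. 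With this identity in hand, part (ii) follows.
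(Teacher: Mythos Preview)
Your proposal is correct and follows exactly the paper's approach: the paper simply records that Corollary~\ref{Cor2} follows from Theorem~\ref{thm1} and Corollary~\ref{Cor} by setting $\alpha=\tfrac12$, and you have carried out this specialization in detail, including the algebraic verification of the discriminant identity that the paper leaves implicit.
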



\begin{thebibliography}{12}
 \bibitem{BM}  J.A. Bondy,  U.S.R. Murty,   Graph Theory,  Springer, New York (2007).

% \bibitem{BP} A. Berman, R. J. Plemmons, Nonegative Matrices in the Mathematical  Sciences,  Academic Press, New York (1994).

 \bibitem{Chen2019} M.-Z. Chen, A-M. Liu, X.-D. Zhang, Spectral extremal results with forbidding linear forests, Graphs Combin. 35 (2019) 335--351.

      \bibitem{Chen2020}  M.-Z. Chen, A-M. Liu, X.-D. Zhang, The signless Laplacian spectral radius of graphs with forbidding linear forests. Linear Algebra Appl. 591 (2020) 25--43.

 \bibitem{Chen2019-2} M.-Z. Chen, X.-D. Zhang,  On the signless Laplacian spectral radius of $K_{s,t}$-minor free graphs, Linear  Multilinear Algebra https://doi.org/10.1080/030810\\87.2019.1650880.

 \bibitem{Ding2001} G. Ding, T. Johnson, P. Seymour, Spanning trees with many leaves, J. Graph Theory 37 (2001) 189--197.

%\bibitem{Benediktovich}  V.I. Benediktovich, Spectral radius of $K_{2,4}$-minor free graph, Dokl. Nats. Akad. Nauk Belarusi 59 (2015) 5--12 (in Russian).



% \bibitem{CRS}  M. Chudnovsky, B. Reed, P. Seymour, The edge-density for $K_{2,t}$ minors, J. Combin. Theory Ser. B 101 (2011) 18--46.

% \bibitem{DJS} G. Ding, T. Johnson, P. Seymour, Spanning trees with many leaves, J. Graph Theory 37 (2001) 189--197.

\bibitem{He2013} B. He, Y.-L. Jin, X.-D. Zhang, Sharp bounds for the signless Laplacian spectral radius in terms of clique number, Linear Algebra Appl. 438 (2013) 3851--3861.
%\bibitem{Diestel} R. Diestel, Graph Theory, Graduate Texts in Mathematics, 173, Springer-Verlag,  New York (2000).

%\bibitem{DO} G. Ding, B. Oporowski, D.P. Sandersb, D. Vertigan, Surfaces, tree-width, clique-minors, and partitions, J. Combin. Theory, Ser. B 79 (2000) 221--246.

%\bibitem{FADJ}   M.A.A. de Freitas, N.M.M. de Abreu, R.R. Del-Vecchio, S. Jurkiewicz, Infinite families of Q-integral graphs, Linear Algebra Appl. 432 (2010) 2353--2360.

%\bibitem{DNP}  M.A.A. de Freitas, V. Nikiforov, L. Patuzzi, Maxima of the $Q$-index: forbidden 4-cycle and 5-cycle, Electron. J. Linear Algebra 26 (2013) 905--916.
%
%     \bibitem{DNP1} M.A.A. de Freitas, V. Nikiforov, L. Patuzzi, Maxima of the $Q$-index: Graphs with no $K_{s,t}$,
%                   Linear Algebra Appl. 496 (2016) 381--391.
%
%\bibitem{Fang} K.-F. Fang, Bounds of eigenvalues of $K_{3,3}$-minor free graphs, J. Inequal. Appl. 2009 (2009) 1--6.
%\bibitem{Guo} J.-M. Guo, The effect on the Laplacian spectral radius of a graph by adding or grafting edges,
% Linear Algebra Appl. 413 (2006) 59--71.

%\bibitem{Hong} Y. Hong, Tree-width, clique-minors, and eigenvalues, Discrete Math. 274 (2004) 281--287.
%
 \bibitem{Mader1967} W. Mader, Homomorphieeigenschaften und mittlere Kantendichte von Graphen, Math. Ann. 174 (1967) 265--268.
%
%\bibitem{Merris}   R. Merris, A note on Laplacian graph eigenvalues, Linear Algebra Appl. 285 (1998) 33--35.
 \bibitem{Nikiforov2007} V. Nikiforov, Bounds on graph eigenvalues II, Linear Algebra Appl. 427 (2007) 183--189.

%\bibitem{Nikiforov2017-1} V. Nikiforov, The spectral radius of graphs with no $K_{2,t}$-minor, Linear Algebra Appl. 531 (2017) 510--515.
 \bibitem{Nikiforov2011}V.~Nikiforov,  Some new results in extremal graph theory. Surveys in combinatorics 2011, 141-181, London Math. Soc. Lecture Note Ser., 392, Cambridge Univ. Press, Cambridge, 2011.
\bibitem{Nikiforov2017} V. Nikiforov,  Merging the $A$- and $Q$-spectral theories. Appl. Anal. Discrete Math. 11 (1) (2017) 81--107.

   \bibitem{Nikiforov2015}  V. Nikiforov, X.Y. Yuan, Maxima of the $Q$-index: Forbidden even cycles, Linear Algebra Appl. 471 (2015) 636--653.

% \bibitem{SH} J. Shi, Y. Hong, The spectral radius of $K_4$-minor free graph, Acta Math. Appl. Sinica
%5 (2001) 167--175.

%\bibitem{SH}  J. Shu, Y. Hong, Upper bounds of the spectral radius for outerplanar graphs and Halin graphs,
%Chinese Ann. Math. Ser. A. 21 (2000) 677--682.


\bibitem{Tait2019}  M. Tait, The Colin de Verdi\`{e}re parameter, excluded minors, and the spectral radius, J. Combin. Theory Ser. A 166 (2019) 42--58.

\bibitem{Tait2017} M. Tait, J. Tobin, Three conjectures in extremal spectral graph theory, J. Combin. Theory Ser. B 126 (2017) 137--161.

\bibitem{Thomason2007} A. Thomason, Disjoint complete minors and bipartite minors, European J. Combin. 28 (2007) 1779--1783.
%\bibitem{Wagner}  K. Wagner, \"{U}ber eine Eigenschaft der ebenen Komplexe, Math. Ann. 114 (1937) 570--590.

%\bibitem{YGW} G. Yu, S.-G. Guo, Y. Wu,  Maxima of the $Q$-index for outer-planar graphs, Linear and Multilinear Algebra 63 (2015) 1837--1848.

%\bibitem{YSH}  G. Yu, J. Shu, Y. Hong, Bounds of spectral radii of $K_{2,3}$-minor free graphs, Electron. J. Linear
%Algebra 23 (2012) 171--179.
%
%\bibitem{YWG} G. Yu, J. Wang, S.-G. Guo, Maxima of the signless Laplacian spectral radius for planar graphs,
%J. Linear Algebra 30 (2015) 795--811.
\bibitem{Yuan2014}  X.Y. Yuan, Maxima of the $Q$-index: forbidden odd cycles, Linear Algebra Appl. 458 (2014) 207--216.

%\bibitem{ZW}  M.Q. Zhai, B. Wang, Proof of a conjecture on the spectral radius of $C_4$-free graphs, Linear Algebra Appl. 437 (2012) 1641--1647.
\end{thebibliography}
\end{document}